\theoremstyle{definition}
\newtheorem*{defn*}{\protect\definitionname}
\theoremstyle{plain}
\newtheorem{thm}{\protect\theoremname}
\theoremstyle{plain}
\newtheorem{lem}[thm]{\protect\lemmaname}
\theoremstyle{plain}
\newtheorem{cor}[thm]{\protect\corollaryname}
\gdef\SetFigFontNFSS#1#2#3#4#5{} 
\gdef\SetFigFont#1#2#3#4#5{} 
\def\clap#1{\hbox to 0pt{\hss#1\hss}}
\DeclareMathOperator{\lmix}{dmix}
\DeclareMathOperator{\mix}{tvmix}
\DeclareMathOperator{\imix}{imix}
\DeclareMathOperator{\ctvmix}{ctvmix}
\DeclareMathOperator{\ectvmix}{etvmix}
\definecolor{myblue}{rgb}{0.09,0.32,0.44} 
\theoremstyle{remark}
\newtheorem*{qst*}{Question}
\newtheorem*{rmrks*}{Remarks}
\newlength{\tempindent} 
\newcommand{\lazyenum}{
\setlength{\tempindent}{\parindent} 
\begin{enumerate}[leftmargin=0cm,itemindent=0.7cm,labelwidth=\itemindent,labelsep=0cm,align=left,label=\arabic*)]
\setlength{\parskip}{\smallskipamount}
\setlength{\parindent}{\tempindent}
}
\renewcommand{\andify}{%
  \nxandlist{\unskip, }{\unskip{} \@@and~}{\unskip{} \@@and~}}
\def\author@andify{%
  \nxandlist {\unskip ,\penalty-1 \space\ignorespaces}%
    {\unskip {} \@@and~}%
    {\unskip \penalty-2 \space \@@and~}%
}
\let\@wraptoccontribs\wraptoccontribs
\def\afs#1#2{\href{#1}{\nolinkurl{#2}}}
\def\afs#1#2{\burlalt{#1}{#2}}
\providecommand{\corollaryname}{Corollary}
\providecommand{\definitionname}{Definition}
\providecommand{\lemmaname}{Lemma}
\providecommand{\theoremname}{Theorem}
\begin{document}
\title{Comparing with octopi}
\author{Gil Alon and Gady Kozma}
\address{GA: Department of Mathematics and Computer Science, The Open University
of Israel, 4353701 Raanana, Israel}
\address{GK: Department of Mathematics and Computer Science, The Weizmann Institute
of Science, 76100 Rehovot, Israel.}
\begin{abstract}
Operator inequalities with a geometric flavour have been successful
in studying mixing of random walks and quantum mechanics. We suggest
a new way to extract such inequalities using the octopus inequality
of Caputo, Liggett and Richthammer. 
\end{abstract}

\maketitle

\section{Introduction}

This note concerns itself with random walks on the symmetric group
$S_{n}$ generated by sets of transpositions, i.e.\ elements which
exchange some $i\ne j$ in $\{1,\dotsc,n\}$ and keep the others as
are. Draw a graph $G$ whose vertices are $\{1,\dotsc,n\}$ and with
an edge between $i$ and $j$ if the transposition $(i,j)$ is in
our set of generators. The random walk on $S_{n}$ with such a set
of generators is called ``the interchange process on $G$'' and
is an object of interest in interacting particle systems and quantum
mechanics. Because of the connection to mathematical physics, the
most interesting graphs are those that have a geometric structure.
For example, boxes in 2 or 3 dimensional grid.

Geometric graphs are difficult to attack using purely algebraic methods,
so some analysis is necessary. One idea that was discovered a number
of times independently is an operator inequality approach. Roughly,
it goes as follows: Take the standard ``multi-commodity flow'' estimate
for the mixing time \cite{S92}. Replace each inequality of numbers
in the proof with a corresponding inequality of operators (a very
quantum idea!). One gets an inequality comparing the generator of
the interchange process on $G$ to the generator of the interchange
process on the complete graph $K_{n}$. This last object can be attacked
purely algebraically. See \cite{Q92,DSC93,CGS15} \textemdash{} to
the best of our knowledge, all three groups of authors developed this
idea independently. 

From a different starting point, in 1992 David Aldous made a general
conjecture about the second eigenvalue of the interchange process.
Handjani and Jungrais suggested an inductive approach and used it
to prove Aldous' conjecture when the graph $G$ is a tree \cite{HJ96}.
Applying the approach of \cite{HJ96} to general graphs required an
inequality, first conjectured in \cite{D10} and in a preliminary
version of \cite{CLR10}, and then proved by Caputo, Liggett and Richthammer
\cite{CLR10} who christened it ``the octopus inequality'' for reasons
which remain a mystery (it certainly does not enjoy 8 arms).

Even though the octopus inequality was devised for a specific approach
to a specific problem, it turns out to have more applications. The
first to use the octopus lemma for something new was Chen \cite{C17}.
Here we give yet another application. We use it to reprove and strengthen
the operator inequality of \cite{Q92,DSC93,CGS15}. For boxes in $\mathbb{Z}^{d}$
our estimate is no better than the existing one, but it is better
for some graphs, and allows to improve results in the literature \cite{J12,O13,MS16}.

Is there something especially interesting in the octopus lemma, allowing
it to be used for various unintended applications? Or is it the case
that any non-trivial inequality for convolution operators on $L^{2}(S_{n})$
is bound to have applications, because they are so hard to get? Only
time will tell.

\section{The main estimate}

For $i\ne j$ we define $\nabla_{ij}=1-(ij)$, an element in $S_{n}[\mathbb{C}]$
(i.e.~formal sums of elements in $S_{n}$ with complex coefficients).
Any element $A=\sum c_{\sigma}\sigma\in S_{n}[\mathbb{C}]$ can be
thought of as a (convolution) operator on $L^{2}(S_{n})$ given by
\[
(Af)(\tau)=\sum_{\sigma}c_{\sigma}f(\sigma\tau).
\]
For a general $A\in S_{n}[\mathbb{C}]$ the corresponding operator
is not necessarily self-adjoint, but it is the case for $\nabla_{ij}$.

A function $w:\binom{\{1,\dotsc,n\}}{2}\to[0,\infty)$ will be called
a ``weight function'' and we will denote $w_{ij}=w_{ji}=w(\{i,j\})$.
For any such $w$ denote 
\[
\Delta_{w}=\sum_{i<j}w_{ij}\nabla_{ij},
\]
again, as either an element of $S_{n}[\mathbb{C}]$ or as an operator
on $L^{2}(S_{n})$. Denote 
\begin{equation}
w_{i}=\sum_{j\ne i}w_{ij}\qquad w_{\text{tot}}=\sum_{i}w_{i}.\label{eq:def tot}
\end{equation}
We will associate a graph $G$ on $n$ vertices with the weight function
$w$ with $w_{ij}=1$ whenever $\{i,j\}$ is an edge and 0 otherwise.
For example,
\[
\Delta_{K_{n}}=\sum_{i<j}\nabla_{ij}.
\]
The operator $\Delta_{K_{n}}$ has been studied extensively using
representation theory and is relatively well-understood. Our main
result is a way to compare $\Delta_{w}$ for a general $w$ to $\Delta_{K_{n}}$.
But the comparison uses the notion of the mixing time. Since this
has multiple definitions (essentially equivalent, granted), let us
state the one we will be using.
\begin{defn*}
Let $w$ be a weight function. Let $R_{t}$ be the Markov chain on
$\{1,\dotsc,n\}$ whose transition probabilities $p(i,j)$ are given
by
\[
p(i,j)=\begin{cases}
\frac{w_{ij}}{2w_{i}} & i\ne j\\
\frac{1}{2} & i=j.
\end{cases}
\]
In other words, the natural discrete time Markov chain associated
with $w$, with an extra $\frac{1}{2}$ laziness. Let $p_{t}(i,j)$
be the probability that $R_{t}=j$, when $R_{0}=i$. Let $\pi(j)=w_{j}/w_{\textrm{tot}}$
be the stationary measure (it is well-known that, under an assumption
of connectivity, it is also the case that $\pi(j)=\lim_{t\to\infty}p_{t}(i,j)$
and in particular the limit exists and is independent of $i$). Denote
\begin{equation}
\lmix w:=\min\{t:\forall i,j\;p_{t}(i,j)>\tfrac{3}{4}\pi(j)\}.\label{eq:def mixing}
\end{equation}
(``dmix'' standing for discrete time mixing. Unfortunately there
will be a few different notions of mixing in this paper, so we need
to be careful). Note that the function $\min_{i,j}p_{t}(i,j)/\pi(j)$
is increasing in $t$, so in fact the condition of the definition
holds for all $t\ge\lmix w$ (this follows from the stationarity of
$\pi$, we skip the easy calculation). Note also that we only require
a lower bound on $p_{t}$. This means that our definition of $\lmix w$
is equivalent to the usual (total variation) mixing time and not to
the (sometimes larger) $l^{2}$ mixing time. We will prove this below
in lemma \ref{lem:hashevakh lealdous}.
\end{defn*}
Going back to $\Delta_{w}$, the result we wish to prove is essentially
$\Delta_{w}\ge(c/n\lmix w)\Delta_{K_{n}}$, but there is an annoying
extra term (which we will denote by $\delta$) which is constant in
all cases of interest, but not always. Let us therefore state three
convenient results which only bound this term, and then define it
precisely.
\begin{thm}
\label{thm:easy}Let $w$ be an arbitrary weight function. Then
\begin{equation}
\Delta_{w}\ge\frac{c\delta}{\lmix w}\frac{\min_{i}w_{i}^{2}}{w_{\mathrm{tot}}}\Delta_{K_{n}}\label{eq:thm easy}
\end{equation}
where the factor $\delta$ has the following estimates:
\begin{enumerate}
\item $\delta\ge c\left(\frac{\min^{*}w_{ij}}{\max w_{i}}\right)^{2}$ where
$\min^{*}w_{ij}=\min\{w_{ij}:w_{ij}>0\}.$ 
\item $\delta\ge c$ when $w$ comes from a regular graph.
\item It always holds that $\delta\ge\frac{1}{2\lmix w}$.
\end{enumerate}
\end{thm}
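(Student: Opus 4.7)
The plan is to combine the octopus inequality of Caputo--Liggett--Richthammer with the random-walk mixing built into the definition of $\lmix w$. Recall that octopus states: for any non-negative weights $x_1,\dotsc,x_n$ and any reference vertex $v$,
\[
2\Bigl(\sum_i x_i\Bigr)\sum_i x_i \nabla_{vi} \;\ge\; \sum_{i,j} x_i x_j \nabla_{ij}
\]
as operators on $L^{2}(S_{n})$. This is the only substantive non-trivial input I plan to use; everything else is manipulation.

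First, I would apply octopus with $x_i = p_t(v,i)$ for $t=\lmix w$. Since $\sum_i x_i = 1$, the inequality becomes
\[
2\sum_i p_t(v,i)\nabla_{vi} \;\ge\; \sum_{i,j} p_t(v,i)p_t(v,j)\nabla_{ij}.
\]
By the defining mixing bound $p_t(v,i)\ge\tfrac34 w_i/w_{\mathrm{tot}}$, the right-hand side dominates a constant times $w_{\mathrm{tot}}^{-2}\sum_{i,j} w_i w_j \nabla_{ij}$, and this in turn dominates $c\,(\min_i w_i)^{2} w_{\mathrm{tot}}^{-2}\,\Delta_{K_n}$, which already carries the two ``easy'' factors $(\min_i w_i)^{2}$ and $w_{\mathrm{tot}}^{-1}$ appearing in the target bound.

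The second, and main, step is to upper-bound the left-hand star $\sum_i p_t(v,i)\nabla_{vi}$ by a multiple of $\Delta_w$. Averaging over $v$ with weight $w_v$ and using reversibility ($w_v p_t(v,i)=w_i p_t(i,v)$) turns this into a $\Delta_{\tilde w}$ operator with $\tilde w_{vi}\asymp w_v p_t(v,i)$, and one must then prove $\Delta_{\tilde w}\le O(\lmix w)\cdot\Delta_w$. This is the step I expect to be the main obstacle: the trivial algebraic consequence $\nabla_{ij}\le 2(\nabla_{vi}+\nabla_{vj})$ of octopus, applied iteratively along walk paths of length $t$, costs an exponential $2^{t}$ factor, so a subtler second use of octopus is needed — one that aggregates all paths coherently and produces only a linear penalty in $t=\lmix w$, accounting for precisely the $1/\lmix w$ factor in the theorem.

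The three refinements of $\delta$ then arise from different ways of doing the octopus estimate in Step 1:
\begin{enumerate}
\item The bound $\delta\ge c(\min^{*}w_{ij}/\max_i w_i)^{2}$ comes from running octopus with weights proportional to actual edge weights, so that the weight-ratio penalty is tracked explicitly.
\item For a regular graph all $w_i$ coincide and all nonzero $w_{ij}=1$, so the relevant ratio is already a universal constant.
\item The universal bound $\delta\ge 1/(2\lmix w)$ comes from the worst-case uniform version of the same scheme, in which we forfeit an extra factor of $\lmix w$ rather than tracking structural information about the weights.
\end{enumerate}
Throughout, the crux is the operator comparison $\Delta_{\tilde w}\le O(\lmix w)\Delta_w$; once it is in place, the three claims of the theorem reduce to bookkeeping of constants in the octopus step.
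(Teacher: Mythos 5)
Your proposal correctly assembles the outer skeleton, but the step you yourself flag as ``the main obstacle'' --- proving $\Delta_{\tilde w}\le O(\lmix w)\,\Delta_w$, where $\tilde w_{vi}\asymp w_v p_t(v,i)$ is the weight function of the $t$-step chain --- is exactly the whole content of the theorem, and you do not supply it. The paper's resolution is a dyadic doubling argument: octopus is not applied once at time $t=\lmix w$, but repeatedly, each time to the stars of the \emph{current} weight function. Writing $u^{(2)}_{ij}=\sum_k u_{ik}u_{kj}/u_k$ for the two-step weights (after adding $\tfrac12$-laziness, $u_{ii}=w_i$), a single octopus application to each star centered at $i$ gives $\Delta_{u^{(2)}}\le 2(1+\epsilon)\Delta_u$ with $\epsilon=\max_i u_{ii}/u_i$, and iterating $k=\lceil\log_2\lmix w\rceil$ times gives $\Delta_{u^{(2^k)}}\le 2^k\prod_j(1+\epsilon_j)\Delta_u$ with $\epsilon_j=\max_i p_{2^j}(i,i)$. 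This is what produces the linear (not exponential) penalty in $\lmix w$, and it is also where $\delta$ comes from: $1/\delta$ is precisely the product $\prod_j(1+\max_i p_{2^j}(i,i))$ over dyadic scales. Your single application of octopus at time $t$ is, by contrast, redundant: once mixing gives $\tilde w_{ij}\gtrsim \min_i w_i^2/w_{\mathrm{tot}}$ for \emph{every} pair, termwise comparison already yields $\Delta_{\tilde w}\gtrsim (\min_i w_i^2/w_{\mathrm{tot}})\,\Delta_{K_n}$ with no further inequality needed; all the work is in the comparison you deferred.

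Your account of the three clauses also does not match what is actually needed. In the paper, clause 3 is the trivial bound $p_{2^j}(i,i)\le 1$ in each of the $\le \log_2\lmix w+1$ factors of the product. Clauses 1 and 2 are \emph{not} a matter of tracking weight ratios inside the octopus step: they require genuine decay of return probabilities along the dyadic scales, namely $p_t(i,i)\le \frac{30}{\sqrt t}\,\frac{w_i}{\min^* w_{ij}}$ for connected $w$ (proved via the evolving-sets/isoperimetric heat-kernel bound of Morris--Peres) and $p_t(i,i)\le C t^{-1/4}$ for regular graphs; these make the product over scales converge, giving $\delta\ge c(\min^* w_{ij}/\max_i w_i)^2$ and $\delta\ge c$ respectively. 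So even granting your deferred operator comparison in some form, your bookkeeping for clauses 1 and 2 would still be missing the analytic input that controls the accumulated laziness corrections.
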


We note that in the case covered by clause 2, i.e.\ when $w$ comes
from a regular graph with the degree of all the vertices $d$, then
also $\min w_{i}^{2}/w_{\textrm{tot}}=d/n$ and we get $\Delta_{w}\ge cd\Delta_{K_{n}}/(n\lmix w)$.
Here and below $c$ and $C$ stand for universal positive constants.
Their value could change from line to line or even within the same
line.

Let us now define $\delta$ precisely. It is given by
\begin{equation}
\frac{1}{\delta}=\prod_{k=0}^{\lfloor\log_{2}\lmix w\rfloor}\max_{i}(1+p_{2^{k}}(i,i)).\label{eq:def delta}
\end{equation}
where $p_{t}$ are as in the definition of our lazy mixing time.
\begin{thm}
\label{thm:main}(\ref{eq:thm easy}) holds with $\delta$ given by
(\ref{eq:def delta}).
\end{thm}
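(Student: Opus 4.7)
The plan is to introduce a one-parameter family of positive operators
\[
D_t := \sum_{j<k} \pi(j)\, p_t(j,k)\, \nabla_{jk}
\]
(symmetric in $j,k$ by the reversibility identity $\pi(j)p_t(j,k)=\pi(k)p_t(k,j)$) and to climb from $t=1$ up to $t\approx\lmix w$ by successive applications of the Caputo, Liggett and Richthammer octopus inequality. The two endpoints of the family are what anchor the argument. At $t=1$, direct substitution of $p_1(j,k)=w_{jk}/(2w_j)$ for $j\ne k$ gives $D_1=\Delta_w/(2w_{\mathrm{tot}})$. On the other end, once $2^K\ge\lmix w$, the mixing assumption yields $p_{2^K}(j,k)\ge\tfrac{3}{4}\pi(k)$, so using $\pi(l)=w_l/w_{\mathrm{tot}}$,
\[
D_{2^K}\ge \tfrac{3}{4}\sum_{j<k}\pi(j)\pi(k)\nabla_{jk}\ge \tfrac{3}{4}\,\frac{(\min_l w_l)^2}{w_{\mathrm{tot}}^2}\,\Delta_{K_n}.
\]

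The heart of the argument will be a doubling inequality
\[
D_{2t}\le 2\bigl(1+\max_l p_t(l,l)\bigr)\,D_t.
\]
To derive it I apply the octopus inequality centered at a generic vertex $i$, with tentacle weights equal to the transition probabilities $p_t(i,\cdot)$,
\[
(1-p_t(i,i))\sum_{j\ne i}p_t(i,j)\nabla_{ij}\ \ge\ \sum_{\substack{j<k\\ j,k\ne i}}p_t(i,j)p_t(i,k)\,\nabla_{jk},
\]
and multiply by $\pi(i)$ and sum over $i$. Reversibility rewrites the left side as $\sum_{\{i,j\}}\pi(i)p_t(i,j)\bigl((1-p_t(i,i))+(1-p_t(j,j))\bigr)\nabla_{ij}$, which is bounded operator-wise by $2D_t$. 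On the right, Chapman--Kolmogorov gives $\sum_i\pi(i)p_t(i,j)p_t(i,k)=\pi(j)p_{2t}(j,k)$, so the full sum would be exactly $D_{2t}$, while the two diagonal indices $i=j$ and $i=k$ that octopus excludes contribute $\pi(j)p_t(j,k)(p_t(j,j)+p_t(k,k))\nabla_{jk}$, dominated operator-wise by $2\max_l p_t(l,l)\,D_t$. Rearranging yields the doubling inequality.

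Iterating the doubling $K:=\lfloor\log_2\lmix w\rfloor+1$ times telescopes into $D_{2^K}\le (2^K/\delta)\,D_1$, with $\delta$ exactly as defined in \eqref{eq:def delta}. Combining this with the two endpoint identifications above and the bound $2^K\le 2\lmix w$ rearranges to $\Delta_w\ge c(\delta/\lmix w)(\min_l w_l^2/w_{\mathrm{tot}})\Delta_{K_n}$, as required. The step I expect to be the main obstacle is the diagonal bookkeeping in the doubling inequality: the terms $i\in\{j,k\}$ omitted by octopus are exactly what produce the $\max_l p_t(l,l)$ factor at each level, and care is needed so that these corrections match the product defining $\delta$ rather than generating a larger multiplicative loss. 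Everything else reduces to symmetrization and reversibility algebra once the family $D_t$ is in hand.
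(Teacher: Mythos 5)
Your proposal is correct and is essentially the paper's own argument: your operator $D_t$ is, up to the constant factor $1/(2w_{\mathrm{tot}})$, exactly the paper's $\Delta_{u^{(2^k)}}$ for the lazy chain, and your doubling inequality $D_{2t}\le 2(1+\max_l p_t(l,l))D_t$, obtained by applying the octopus inequality at each vertex with tentacle weights $p_t(i,\cdot)$ and absorbing the omitted $i\in\{j,k\}$ terms into the $\max_l p_t(l,l)$ factor, is the paper's inequality (\ref{eq:w2}), iterated in the same way and combined with the same endpoint bounds at $t=1$ and $t=2^K\ge\lmix w$.
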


Clause 3 of theorem \ref{thm:easy} is an obvious corollary of theorem
\ref{thm:main} (just bound all probabilities by 1). Clauses 1 and
2 are not so difficult either, but we will show them later.

The last thing we wish to do before going to the proof of theorem
\ref{thm:main} is to recall the octopus inequality. It claims that
for any numbers $w_{1i}$ we have
\[
\sum_{2\le i\le n}w_{1i}\nabla_{1i}\ge\sum_{2\le i<j\le n}\frac{w_{1i}w_{1j}}{w_{1}}\nabla_{ij}
\]
 where the inequality is as operators on $L^{2}(S_{n})$, and we still
to use (\ref{eq:def tot}) so $w_{1}=\sum_{i=2}^{n}w_{1i}$. See \cite[theorem 2.3]{CLR10},
and perhaps also \cite[theorem 4.2]{C16} which has a simplified proof,
and a statement identical to ours. 

\begin{proof}
[Proof of theorem \ref{thm:main}] For the purpose of the proof, it
will be convenient to extend weight functions also to the case $i=j$.
Let $u$ be such a weight function, i.e.\ $u_{ij}\in[0,\infty)$
for $i\le j$ in $\{1,\dotsc,n\}$ and $u_{ij}=u_{ji}$. We define
$\nabla_{ii}=0$ so the $u_{ii}$ do not affect $\Delta_{u}$, but
we do redefine $u_{i}=\sum_{j}u_{ij}$ so the numbers $u_{ij}/u_{i}$
still add up to $1$ and can be thought of as transition probabilities
of a Markov chain, this time one that might stay in the same place
for a turn.

For any weight function $u$ we define a new weight function $u^{(2)}$
by
\[
u_{ij}^{(2)}=\sum_{k}\frac{u_{ik}u_{kj}}{u_{k}}.
\]
It is straightforward to check that $u_{i}^{(2)}=u_{i}$ and that
the corresponding Markov process is the same as doing two steps in
the Markov process of $u$. Our goal is to compare $\Delta_{u}$ and
$\Delta_{u^{(2)}}$.

For this purpose write 
\[
\Delta_{u}=\frac{1}{2}\sum_{i}\Big(\sum_{j\ne i}u_{ij}\nabla_{ij}\Big)
\]
and apply the octopus inequality to each term. We get
\begin{align*}
\Delta_{u} & \ge\frac{1}{2}\sum_{i}\sum_{i\ne j<k\ne i}\frac{u_{ij}u_{ik}}{u_{i}-u_{ii}}\nabla_{jk}\ge\frac{1}{2}\sum_{i}\sum_{i\ne j<k\ne i}\frac{u_{ij}u_{ik}}{u_{i}}\nabla_{jk}\\
 & =\frac{1}{2}\sum_{j<k}\nabla_{jk}\sum_{i\not\in\{j,k\}}\frac{u_{ji}u_{ik}}{u_{i}}.
\end{align*}
The sum over all $i$, without the restriction $i\not\in\{j,k\}$,
is our ``target'' $\Delta_{u^{(2)}}$ so we get
\begin{equation}
\Delta_{u}\ge\frac{1}{2}\Delta_{u^{(2)}}-\frac{1}{2}\sum_{j<k}\nabla_{jk}\sum_{i\in\{j,k\}}\frac{u_{ji}u_{ik}}{u_{i}}.\label{eq:octopus2}
\end{equation}
To estimate the remainder, define
\[
\epsilon=\max_{i}\frac{u_{ii}}{u_{i}}
\]
and get
\begin{equation}
\sum_{j<k}\nabla_{jk}\sum_{i\in\{j,k\}}\frac{u_{ji}u_{ik}}{u_{i}}\le2\epsilon\sum_{j<k}\nabla_{jk}u_{jk}=2\epsilon\Delta_{u}\label{eq:eps}
\end{equation}
Summing (\ref{eq:octopus2}) and (\ref{eq:eps}) we get
\begin{equation}
\Delta_{u^{(2)}}\le(2+2\epsilon)\Delta_{u}.\label{eq:w2}
\end{equation}
We now apply (\ref{eq:w2}) inductively. Define 
\[
u^{(2^{k})}=\left(u^{(2^{k-1})}\right)^{(2)}
\]
(so $u^{(2^{k})}$ corresponds to the Markov process that does $2^{k}$
steps at a time). Define
\[
\epsilon_{k}=\max_{i}\frac{u_{ii}^{(2^{k})}}{u_{i}^{(2^{k})}}.
\]
Then applying (\ref{eq:w2}) $k$ times gives
\begin{equation}
\Delta_{u^{(2^{k})}}\le2^{k}\Delta_{u}\prod_{i=0}^{k-1}(1+\epsilon_{k}).\label{eq:2^k}
\end{equation}
Recall that in the statement of the theorem we are given a weight
function $w_{ij}$, $i\ne j$. To apply (\ref{eq:2^k}) to this $w$,
first define 
\[
u_{ij}=\begin{cases}
w_{ij} & i\ne j\\
w_{i} & i=j
\end{cases}
\]
so that the Markov chain corresponding to $u$ is just the Markov
chain corresponding to $w$, with an extra $\frac{1}{2}$ laziness,
as in our definition of lazy mixing time. In particular
\[
\epsilon_{k}=\max_{i}p_{2^{k}}(i,i).
\]
Let $k$ be the minimal such that $2^{k}\ge\lmix w$. By our definition
of the lazy mixing time, after $2^{k}$ steps the probability to pass
from every $i$ to every $j$ is at least $\frac{3}{4}$ the stationary
measure, i.e.\ $\tfrac{3}{4}w_{j}/w_{\text{tot}}$. Using the equality
$u_{i}^{(2^{k})}=\linebreak[0]u_{i}=2w_{i}$ we get
\[
u_{ij}^{(2^{k})}\ge\tfrac{3}{2}\frac{w_{i}w_{j}}{w_{\text{tot}}}\ge\frac{\min w_{i}^{2}}{w_{\textrm{tot}}}\quad\forall i,j.
\]
or
\[
\Delta_{u^{(2^{k})}}\ge\frac{\min w_{i}^{2}}{w_{\text{tot}}}\Delta_{K_{n}}.
\]
With (\ref{eq:2^k}) we get
\[
\Delta_{u}\ge2^{-k}\prod_{i=0}^{k-1}(1+\epsilon_{k})^{-1}\frac{\min w_{i}^{2}}{w_{\text{tot}}}\Delta_{K_{n}}.
\]
We have $2^{-k}>1/(2\lmix w)$ and $\prod(1+\epsilon_{k})^{-1}\ge\delta$
(in fact they are equal except when $\log_{2}\lmix w$ is integer,
in which case $\delta$ has one more term). The theorem is thus proved.
\end{proof}

The proof of clauses 1 and 2 of theorem \ref{thm:easy} uses the well-known
connection between isoperimetric inequalities and transition probabilities
of random walk. Let us formulate it as a lemma.
\begin{lem}
\label{lem:connectivity}For any connected weight function $w$, any
$t\le\lmix w$ and any vertices $i$ and $j$ we have
\[
p_{t}(i,j)\le\frac{30}{\sqrt{t}}\cdot\frac{w_{i}}{\min^{*}w_{ij}}.
\]
\end{lem}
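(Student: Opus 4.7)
This is a standard-style heat-kernel upper bound for the lazy reversible random walk on the weighted graph $(V,w)$. The rate $1/\sqrt{t}$ is the universal worst-case (``$1$-dimensional'') rate, attained for instance on cycles, and the geometric factor $w_{i}/\min^{*}w_{ij}$ reflects the conductance structure near the starting vertex. I would prove it by standard Nash-inequality machinery.

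First I would reduce to a diagonal estimate. The chain is reversible with respect to $\pi(k)=w_{k}/w_{\mathrm{tot}}$. Applying Cauchy--Schwarz to $p_{t}(i,j)=\sum_{k}p_{t/2}(i,k)p_{t/2}(k,j)$ with weights $1/\pi(k)$, and using the reversibility identity $\sum_{k}p_{t/2}(i,k)^{2}/\pi(k)=p_{t}(i,i)/\pi(i)$, yields
\[ p_{t}(i,j) \le \sqrt{p_{t}(i,i)\, p_{t}(j,j)\, \pi(j)/\pi(i)}. \]
Thus a diagonal bound of the form $p_{t}(k,k) \le C\, w_{k}/(\sqrt{t}\,\min^{*}w_{ij})$ for every $k$, combined with reversibility of the chain to switch the roles of $i$ and $j$ when needed, would yield the desired off-diagonal estimate.

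To prove the diagonal bound, I would establish a Nash-type inequality for the Dirichlet form $\mathcal{E}(f,f) = \tfrac{1}{2w_{\mathrm{tot}}} \sum_{\{i,j\}} w_{ij}(f(i)-f(j))^{2}$, of the shape $\|f-\pi(f)\|_{L^{2}(\pi)}^{4} \le C_{\mathrm{Nash}}\, \mathcal{E}(f,f)\, \|f\|_{L^{1}(\pi)}^{2}$ with $C_{\mathrm{Nash}}$ of order $(w_{\mathrm{tot}}/\min^{*}w_{ij})^{2}$. Iterating this inequality along the semigroup $P^{t}$ then yields the diagonal decay.

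The main technical obstacle will be proving the $1$-dimensional Nash inequality with the correct explicit constants. The key geometric input is the crude isoperimetric fact that every non-trivial cut in the graph has total weight at least $\min^{*}w_{ij}$; the challenge is to upgrade this into a Nash inequality with the right ``dimension $1$'' exponents, uniformly over the underlying graph's dimension. An alternative route that avoids Nash is the evolving-sets framework of Morris and Peres, which yields heat-kernel estimates directly from the isoperimetric profile and might give a cleaner argument. The restriction $t \le \lmix w$ is essential: for $t$ much larger, the right-hand side would fall below $\pi(j)=w_j/w_{\mathrm{tot}}$, contradicting $p_{t}(i,j)\to\pi(j)$ as $t\to\infty$.
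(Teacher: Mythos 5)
Your plan defers the whole difficulty to the Nash inequality, and the inequality you wrote down is the wrong one for a $1/\sqrt{t}$ bound. The form $\|f-\pi(f)\|_{2}^{4}\le C_{\mathrm{Nash}}\,\mathcal{E}(f,f)\,\|f\|_{1}^{2}$ is the dimension-\emph{two} Nash inequality: with $C_{\mathrm{Nash}}\asymp(w_{\mathrm{tot}}/\min^{*}w_{ij})^{2}$ the standard iteration along the semigroup yields only $p_{t}(k,k)\le\pi(k)+Cw_{k}w_{\mathrm{tot}}/\bigl((\min^{*}w_{ij})^{2}t\bigr)$, i.e.\ a $1/t$ decay of the ratio with a prefactor $w_{\mathrm{tot}}^{2}/(\min^{*}w_{ij})^{2}$. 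On the $n$-cycle this gives $Cn/t$, which is weaker than the claimed $C/\sqrt{t}$ throughout essentially the whole relevant range $t\le\lmix\asymp n^{2}$, and feeding it into the product defining $\delta$ would cost a power of $w_{\mathrm{tot}}$ rather than the constants needed for clauses 1 and 2 of theorem \ref{thm:easy}. What you need is the $d=1$ form $\|f\|_{2}^{6}\le C\,\mathcal{E}(f,f)\,\|f\|_{1}^{4}$ with $C\asymp(w_{\mathrm{tot}}/\min^{*}w_{ij})^{2}$; this \emph{is} provable from the only isoperimetric input available (every nontrivial cut has weight at least $\min^{*}w_{ij}$), e.g.\ by the co-area formula applied to $f^{2}$: for $f\ge0$ vanishing somewhere, $\|f\|_{\infty}^{2}\le(\min^{*}w_{ij})^{-1}\sum w_{ij}|f_{i}^{2}-f_{j}^{2}|\le Cw_{\mathrm{tot}}(\min^{*}w_{ij})^{-1}\mathcal{E}(f,f)^{1/2}\|f\|_{L^{2}(\pi)}$, combined with $\|f\|_{2}^{2}\le\|f\|_{\infty}\|f\|_{1}$. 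You also have to handle the non-decaying constant mode (your inequality controls $f-\pi(f)$, while the iteration runs on the evolved density); this is exactly where the hypothesis $t\le\lmix w$ must be used inside the proof, not merely, as you remark, to make the statement true. The paper has the same issue and resolves it by the absorption trick at the end of its argument.

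As for the route: the paper does not use Nash at all; it is precisely your ``alternative'' \textemdash{} it quotes theorem 1 of \cite{MP05} (evolving sets), plugs in the trivial profile bound $\phi(r)\ge\min^{*}w_{ij}/r$ after normalising $w_{\mathrm{tot}}=1$, computes the integral, and then uses $t\le\lmix w$ to absorb the additive $\pi(j)$ term. If you take that route your write-up essentially coincides with the paper's; the Nash route is viable too once the exponents are fixed as above. One more point: your Cauchy--Schwarz reduction to the diagonal produces $w_{j}$, not $w_{i}$, on the right-hand side, and reversibility does not convert one into the other. That is fine \textemdash{} the Morris--Peres bound also naturally produces $\pi(j)$, the lemma is only ever applied with $i=j$, and the $w_{i}$ in the statement appears to be a slip (on a weighted path with rapidly growing weights the $w_{i}$ version fails at $t=\lmix w$) \textemdash{} so do not spend effort trying to force the literal $w_{i}$ bound out of your scheme.
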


(a weight function $w$ is called connected if for any $i$ and $j$
there exists some $t$ such that $p_{t}(i,j)>0$. As always, $p_{t}$
are the transition probabilities after adding to $w$ $\frac{1}{2}$-laziness)
\begin{proof}
As already mentioned, we use the connection between isoperimetric
inequalities and transition probabilities of random walk, \cite{MP05}
will serve as a convenient reference. Here are the necessary definitions:
Let $p_{t}(\cdot,\cdot)$ be the transition probabilities of a finite
Markov chain with laziness at least $\frac{1}{2}$, and let $\pi$
be its stationary measure. For a set $S$ of elements of the chain
define
\[
\pi(S)=\sum_{i\in S}\pi(i)\qquad|\partial S|=\sum_{i\in S,j\not\in S}p(i,j)\pi(i).
\]
Since $w$ may be scaled without changing the result, let us assume
that $w_{\textrm{tot}}=1$ which means that $w_{ij}=p(i,j)\pi(i)$,
and $w_{i}=\pi(i)$. Define the isoperimetric profile function $\phi:[0,\frac{1}{2}]\to[0,\infty)$
by
\[
\phi(r)=\inf\Big\{\frac{|\partial S|}{\pi(S)}:S\text{ such that }\pi(S)\le r\Big\}
\]
and extend $\phi$ beyond $\frac{1}{2}$ by defining $\phi(r)=\phi(\frac{1}{2})$
for all $r>\frac{1}{2}$. Then theorem 1 of \cite{MP05} states that
for every $\lambda>0$, if 
\begin{equation}
t\ge1+\int_{4\min(\pi(i),\pi(j))}^{4/\lambda}\frac{4dr}{r\phi^{2}(r)}\label{eq:felini-1}
\end{equation}
then 
\begin{equation}
\Big|\frac{p_{t}(i,j)-\pi(j)}{\pi(j)}\Big|\le\lambda.\label{eq:mixing-1}
\end{equation}
(if you try to compare to \cite{MP05}, you will notice that they
denote by $\epsilon$ what we denote by $\lambda$ \textemdash{} but
have no fear, their results are not restricted to small $\lambda$
in any way).

Since our only assumption is connectivity, the only lower bound we
can give on $\phi$ is
\[
\phi(r)\ge\frac{\min^{*}w_{ij}}{r}
\]
which allows to bound the integral by
\[
\int_{4\min(\pi(x),\pi(y))}^{4/\lambda}\frac{4dr}{r\phi^{2}(r)}\le\frac{4}{(\min^{*}w_{ij})^{2}}\int_{0}^{4/\lambda}r\,dr=\frac{32}{(\lambda\min^{*}w_{ij})^{2}}.
\]
Set therefore 
\[
\lambda=\sqrt{\frac{32}{(t-1)\min^{*}w_{ij}^{2}}}\stackrel{(*)}{\le}\frac{6}{\sqrt{t+1}\min^{*}w_{ij}}
\]
where in $(*)$ we assume $t\ge17$, which we can, as for $t<17$
the lemma holds trivially (the stated bound for the probability is
bigger than one). This choice of $\lambda$ makes the integral $\le t-1$
and hence $t$ satisfies the requirement (\ref{eq:felini-1}) and
we may conclude (\ref{eq:mixing-1}). Writing (\ref{eq:mixing-1})
with the value of $\lambda$ above gives
\[
p_{t}(i,j)\le\left(\frac{6}{\sqrt{t+1}\min^{*}w_{ij}}+1\right)w_{i}.
\]
To get rid of the annoying $+1$ factor, argue as follows: if $6/(\sqrt{t+1}\min^{*}w_{ij})\linebreak[0]<1/4$
then by (\ref{eq:mixing-1}) we must have $t+1\ge\lmix w$ and we
assumed this is not the case. Otherwise we have $1\le24/(\sqrt{t+1}\min^{*}w_{ij})$
and can write
\[
p_{t}(i,j)\le\frac{30}{\sqrt{t+1}}\cdot\frac{w_{i}}{\min^{*}w_{ij}}<\frac{30}{\sqrt{t}}\cdot\frac{w_{i}}{\min^{*}w_{ij}}
\]
as promised.
\end{proof}

\begin{proof}
[Proof of clause 1 of theorem \ref{thm:easy}]Let us first get an
uninteresting case off our hands: the case where $w$ is disconnected,
i.e.\ when there are $i$ and $j$ such that there is no paths of
positive $w$ between them. In this case $\lmix w=\infty$ and theorem
\ref{thm:easy} is trivially true. (Do note that without the assumption
of connectedness the $\delta$ defined by (\ref{eq:def delta}) is
$\infty$, so this assumption is indeed necessary) 

Since $w$ is connected we may apply lemma \ref{lem:connectivity}.
Examine the product 
\[
\frac{1}{\delta}=\prod_{k=0}^{\lfloor\log_{2}\lmix w\rfloor}\max_{i}(1+p_{2^{k}}(i,i)).
\]
For the first $10+2\log_{2}\frac{\max w_{i}}{\min^{*}w_{ij}}$ terms
the bound of lemma \ref{lem:connectivity} is not better than the
trivial bound $p\le1$, so we use it and get a factor of $1024\left(\frac{\max w_{i}}{\min^{*}w_{ij}}\right)^{2}$.
In the rest of the terms the $p_{2^{k}}$ decay exponentially, so
the product is bounded by a constant. This proves the claim.
\end{proof}
\begin{lem}
\label{lem:jonathan}Let $G$ be a regular, connected graph, let $i$
and $j$ be vertices and let $t\le\lmix G$. Then
\[
p_{t}(i,j)\le\frac{C}{t^{1/4}}.
\]
\end{lem}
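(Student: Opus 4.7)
The plan is a case analysis on the common degree $d$ of $G$ relative to $t^{1/4}$.

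If $d\le t^{1/4}$, Lemma \ref{lem:connectivity} already suffices: for a $d$-regular graph $w_i=d$ and $\min^*w_{ij}=1$, so the lemma gives $p_t(i,j)\le 30d/\sqrt{t}\le 30/t^{1/4}$.

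The substantive case is $d>t^{1/4}$, where Lemma \ref{lem:connectivity} is wasteful. In this regime $n\ge d+1>t^{1/4}$, so the stationary value $1/n$ is automatically below $1/t^{1/4}$, and it suffices to show $p_t(i,j)-1/n=O(1/t^{1/4})$. Here I would return to theorem 1 of \cite{MP05} but plug in a sharper bound on the isoperimetric profile $\phi$ specific to regular graphs: for $|S|=k\le d/2$, the degree-count identity $\sum_{v\in S}\deg v=2e(S,S)+e(S,S^c)=dk$ combined with $e(S,S)\le \binom{k}{2}$ forces $e(S,S^c)\ge k(d-k+1)\ge kd/2$. Normalizing $w_{\mathrm{tot}}=1$, this reads $\phi(r)\ge 1/4$ on $r\in[0,d/(2n)]$. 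Feeding this into \cite[Theorem~1]{MP05}, with $\lambda$ chosen both so that $4/\lambda\le d/(2n)$ (i.e.\ the refined bound is valid on the whole integration range) and so that the integral $\int_{4/n}^{4/\lambda}4/(r\phi^2)\,dr\le 64\log(n/\lambda)$ is $\le t-1$ --- for example $\lambda=\max(8n/d,\,n e^{-(t-1)/64})$ --- yields $p_t(i,j)\le (1+\lambda)/n=1/n+\lambda/n=O(1/t^{1/4})$ in either sub-case.

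The main obstacle is the book-keeping of $\lambda$ in this second case: the sharper bound $\phi\ge 1/4$ only holds below the threshold $d/(2n)$, so the Morris--Peres integration must be capped there, while simultaneously the resulting $(1+\lambda)/n$ has to be reconciled with the target $C/t^{1/4}$. For $t$ below a universal constant the trivial estimate $p_t(i,j)\le 1$ already dominates $C/t^{1/4}$, which disposes of any boundary issues arising when $\lambda$ is forced to be large.
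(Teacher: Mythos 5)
Your argument is correct, but in the main case it takes a genuinely different route from the paper. Your first case coincides with the paper: when $d\le t^{1/4}$ both proofs just feed $w_i=d$, $\min^*w_{ij}=1$ into Lemma \ref{lem:connectivity} to get $30d/\sqrt t\le 30/t^{1/4}$. For $d>t^{1/4}$ the paper uses a two-line elementary observation: $\max_{i,j}p_t(i,j)$ is non-increasing in $t$ (here regularity, i.e.\ uniformity of $\pi$, enters), and already after one step every transition probability is at most of order $d^{-1}\le t^{-1/4}$, so no further isoperimetry is needed. You instead return to \cite[Theorem 1]{MP05} with a sharper profile bound specific to regular graphs, namely $e(S,S^c)\ge k(d-k+1)\ge kd/2$ for $|S|=k\le d/2$, hence $\phi\ge\frac14$ below scale $d/(2n)$, and then optimize $\lambda=\max\bigl(8n/d,\,ne^{-(t-1)/64}\bigr)$; your bookkeeping (validity of the profile bound on the whole integration range, the $64\log(n/\lambda)\le t-1$ condition, $n\ge d+1>t^{1/4}$, and the trivial bound when $d\le 8$ forces $t$ below a constant) checks out and yields $p_t(i,j)\le 1/n+8/d+e^{-(t-1)/64}=O(t^{-1/4})$. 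The paper's shortcut is shorter and avoids a second pass through the evolving-sets machinery; your version is heavier but actually gives a stronger estimate in the dense regime (decay like $1/d$ plus an exponentially small term rather than just $t^{-1/4}$), and it treats the diagonal entries $p_t(i,i)$ on the same footing as the off-diagonal ones, where the paper's one-step bound $p_1(i,j)\le d^{-1}$ has to be read as ignoring the laziness term on the diagonal. Either route proves the lemma as stated.
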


\begin{proof}
Denote the common degree of the vertices of $G$ by $d$. We divide
into two cases, according to whether $d\ge t^{1/4}$ or not. If $d\ge t^{1/4}$
then we note that the function $\max_{i,j}p_{t}(i,j)$ is decreasing
in $t$ (we use here that the graph is regular), and that $p_{1}(i,j)\le d^{-1}\le t^{-1/4}$.
If $d<t^{1/4}$ we use lemma \ref{lem:connectivity} and get that
\[
|p_{t}(i,j)|\le\frac{30}{\sqrt{t}}d<\frac{30}{t^{1/4}}.\qedhere
\]
\end{proof}

\begin{proof}
[Proof of clause 2 of theorem \ref{thm:easy}]As in the proof of clause
1, we may assume $G$ is connected. Write, using lemma \ref{lem:jonathan},
\[
\frac{1}{\delta}=\prod_{k=0}^{\lfloor\log_{2}\lmix w\rfloor}\max_{i}(1+p_{2^{k}}(i,i))\le\prod_{k=0}^{\infty}\Big(1+\frac{C}{2^{k/4}}\Big)\le C.\qedhere
\]
\end{proof}
We finish this section with a comparison of $\lmix$ with the more
common notion of mixing time, the total variation mixing time. Denote
\begin{equation}
\mix w=\min\{t:\forall i\;||p_{t}(i,\cdot)-\pi||_{\text{TV}}<\tfrac{1}{4}\}\label{eq:def tvmix}
\end{equation}
where $||\cdot||_{\textrm{TV}}$ is the total variation norm (or $\frac{1}{2}$
of the $l^{1}$ norm).
\begin{lem}
\label{lem:hashevakh lealdous}For reversible Markov chains, $\frac{1}{8}\lmix w\le\mix w\le\lmix w$.
\end{lem}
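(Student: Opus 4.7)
The plan is to prove the two inequalities separately. The easy direction $\mix w \le \lmix w$ should fall directly out of the definitions: at any time $t$ with $p_t(i,j) > \tfrac{3}{4}\pi(j)$ for every $j$, the total-variation deficit satisfies
$$\|p_t(i,\cdot) - \pi\|_{\text{TV}} = \sum_{j:\, p_t(i,j) < \pi(j)}\bigl(\pi(j) - p_t(i,j)\bigr) < \tfrac14 \sum_j \pi(j) = \tfrac14,$$
so such a $t$ is also admissible for $\mix w$. Reversibility plays no role here.

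For the nontrivial direction $\lmix w \le 8\mix w$, the first ingredient is the standard submultiplicativity of $\bar d(t) := \max_{i,j}\|p_t(i,\cdot) - p_t(j,\cdot)\|_{\text{TV}}$: the bound $\bar d(s+t) \le \bar d(s)\,\bar d(t)$ together with $\bar d(t) \le 2\, d_{\text{TV}}(t)$ immediately yields $\bar d(k\mix w) < 2^{-k}$ for every $k$.

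The second, and main, ingredient is a reversibility trick that converts a TV bound at time $t$ into a pointwise lower bound on $p_{2t}(i,j)/\pi(j)$. Detailed balance gives the identity
$$\frac{p_{2t}(i,j)}{\pi(j)} = \sum_k \frac{p_t(i,k)\,p_t(j,k)}{\pi(k)},$$
and Cauchy--Schwarz applied as $\sum_k \sqrt{\pi(k)}\cdot\sqrt{p_t(i,k)p_t(j,k)/\pi(k)}$ turns this into the \emph{lower} bound
$$\frac{p_{2t}(i,j)}{\pi(j)} \;\ge\; \Big(\sum_k \sqrt{p_t(i,k)\, p_t(j,k)}\Big)^2 \;=\; \bigl(1 - H^2(p_t(i,\cdot), p_t(j,\cdot))\bigr)^2,$$
where $H$ is the Hellinger distance. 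The elementary inequality $\min(a,b) \le \sqrt{ab}$ gives $H^2 \le d_{\text{TV}}$, so the right-hand side is at least $(1 - \bar d(t))^2$.

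Combining the two ingredients at $t = 4\mix w$ gives $\bar d(t) < 1/16$, hence $p_{8\mix w}(i,j)/\pi(j) > (15/16)^2 > 3/4$, so $\lmix w \le 8\mix w$. The crux is the reverse-direction Cauchy--Schwarz applied to the detailed-balance expansion of $p_{2t}$; everything else is routine bookkeeping with standard mixing-time inequalities.
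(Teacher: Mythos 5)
Your proposal is correct and follows essentially the same route as the paper: the easy direction is the identical TV computation, and the hard direction rests on the same reversibility identity $p_{2t}(i,j)=\pi(j)\sum_k p_t(i,k)p_t(j,k)/\pi(k)$ followed by the same Cauchy--Schwarz lower bound and the bound $\sum_k\sqrt{p_t(i,k)p_t(j,k)}\ge 1-\|p_t(i,\cdot)-p_t(j,\cdot)\|_{\mathrm{TV}}$, which the paper writes via $\min(a,b)\le\sqrt{ab}$ and you phrase through the Hellinger affinity. Your use of submultiplicativity of $\bar d$ in place of the paper's $d_{t+s}\le 2d_td_s$ is only a cosmetic difference and yields the same constants $(15/16)^2>3/4$ at time $8\mix w$.
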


\begin{proof}
To show $\mix\le\lmix$, choose some $t$ such that $p_{t}(i,j)>\frac{3}{4}\pi(j)$
for all $i$ and $j$ and get 
\[
||p_{t}(i,\cdot)-\pi||_{\textrm{TV}}=\sum_{j}(\pi(j)-p_{t}(i,j))^{+}<\sum_{j}\tfrac{1}{4}\pi(j)=\tfrac{1}{4}.
\]
(this direction does not require reversibility).

In the other direction, assume that at some $t_{0}$ we have $\max_{i}||p_{t_{0}}(i,\cdot)-\pi||_{\text{TV}}=:d_{t}<\frac{1}{4}$.
It is well known that $d_{t}$ is monotonically decreasing, and further
that $d_{t+s}\le2d_{t}d_{s}$ \cite[lemmas 4.11 and 4.12]{LPW09}.
Applying this twice gives for all $t\ge4t_{0}$ that $d_{t}<\frac{1}{32}$.
We write
\[
p_{2t}(i,j)=\sum_{k}p_{t}(i,k)p_{t}(k,j).
\]
Reversibility means that $\pi(j)p_{t}(j,k)=\pi(k)p_{t}(k,j)$ so we
may continue the calculation to get
\begin{equation}
p_{2t}(i,j)=\pi(j)\sum_{k}p_{t}(i,k)p_{t}(j,k)/\pi(k).\label{eq:2t t}
\end{equation}
Using Cauchy-Schwarz gives 
\begin{multline}
\sum_{k}\sqrt{p_{t}(i,k)p_{t}(j,k)}=\sum_{k}\sqrt{\frac{p_{t}(i,k)p_{t}(j,k)}{\pi(k)}}\cdot\sqrt{\pi(k)}\\
\le\Big(\sum_{k}\frac{p_{t}(i,k)p_{t}(j,k)}{\pi(k)}\Big)^{1/2}\Big(\sum_{k}\pi(k)\Big)^{1/2}\label{eq:CS}
\end{multline}
and the term $\sum\pi(k)$ is of course 1 so we get
\begin{align*}
\sqrt{\frac{p_{2t}(i,j)}{\pi(j)}}\, & \stackrel{\textrm{\clap{(\ref{eq:2t t})}}}{=}\Big(\sum_{k}p_{t}(i,k)p_{t}(j,k)/\pi(k)\Big)^{1/2}\\
 & \stackrel{\textrm{\ensuremath{\smash{{\clap{(\ref{eq:CS})}}}}}}{\ge}\sum p_{t}(i,k)^{1/2}p_{t}(j,k)^{1/2}\\
 & \ge\sum\min(p_{t}(i,k),p_{t}(j,k))\\
 & =\sum\frac{p_{t}(i,k)+p_{t}(j,k)}{2}-\frac{|p_{t}(i,k)-p_{t}(j,k)|}{2}\\
 & =1-||p_{t}(i,\cdot)-p_{t}(j,\cdot)||_{\textrm{TV}}\\
 & \ge1-||p_{t}(i,\cdot)-\pi||_{\textrm{TV}}-||\pi-p_{t}(j,\cdot)||_{\textrm{TV}}>\frac{15}{16}.
\end{align*}
All in all we get $p_{t}(i,j)>\frac{225}{256}\pi(j)$ for all $t>8t_{0}$,
proving the lemma.
\end{proof}
For every possible equivalent formulation of the mixing time, including
a few quite similar (but not identical) to ours, see \cite{LW}.

\section{Corollaries}

\subsection{The mixing time}

We start with a corollary for the mixing time. Our operator $\Delta_{w}$
is most suitable for studying random walk in \emph{continuous time}
because it is the generator of this walk (do not be fooled by the
intensive use of discrete time random walk during the proof of theorem
\ref{thm:main}, and from the discrete time formulation of $\delta$
\textemdash{} this is an artifact of the proof method). So let us
recall what is the continuous time interchange process. For a given
weight function $w$ we define a stochastic process on $S_{n}$ in
continuous time by 
\[
p_{t}(i,j)=e^{-t\Delta_{w}}(i,j)
\]
where $e^{-t\Delta}$ is matrix exponentiation (and the matrix is
of course $n!\times n!$). It is well-known that this is a Markov
chain in continuous time, and further, it is equivalent to the following
process: Put marbles on $\{1,\dotsc,n\}$, all different, and Poisson
clocks ``on'' each couple $ij$, with the clock on the edge $ij$
having rate $w_{ij}$, and when the clock rings exchange the two marbles.
This is known as the interchange process corresponding to the weight
function $w$. Mixing time for continuous time random walks is traditionally
defined using the total variation distance, i.e.\ by (\ref{eq:def tvmix}).
(The assumption of laziness we had in discrete time is no longer necessary).
Denote by $\imix w$ the mixing time of the interchange process corresponding
to $w$.
\begin{lem}
If $\Delta_{w}\ge a\Delta_{K_{n}}$ then 
\[
\imix w\le\frac{C}{a}\frac{\log n}{n}.
\]
\end{lem}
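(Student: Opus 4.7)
The plan is to pass to the Fourier side using the irreducible representations of $S_{n}$, descend the operator inequality to each isotypic block (where $\Delta_{K_{n}}$ becomes a scalar by Schur's lemma), and then invoke the classical Diaconis--Shahshahani estimate for random transpositions. Concretely, decompose $L^{2}(S_{n})=\bigoplus_{\rho}V_{\rho}\otimes V_{\rho}^{*}$ as a representation of $S_{n}\times S_{n}$ via left and right translation. Since $\Delta_{w}$ acts by left convolution it commutes with right translation, so it preserves each isotypic component and acts there as $M_{\rho}^{(w)}\otimes I_{d_{\rho}}$, where
\[
M_{\rho}^{(w)}=\sum_{i<j}w_{ij}\bigl(I-\rho((ij))\bigr)
\]
is a self-adjoint operator on $V_{\rho}$.

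The key observation is that $\Delta_{K_{n}}$ is a sum over the conjugacy class of transpositions and hence central in the group algebra, so Schur's lemma gives $M_{\rho}^{(K_{n})}=\nu_{\rho}I_{d_{\rho}}$ for some scalar $\nu_{\rho}\ge 0$. The isotypic decomposition is orthogonal and preserved by both operators, so the hypothesis $\Delta_{w}\ge a\Delta_{K_{n}}$ descends block by block to the matrix inequality $M_{\rho}^{(w)}\ge a\nu_{\rho}I_{d_{\rho}}$. In particular every eigenvalue of $M_{\rho}^{(w)}$ is at least $a\nu_{\rho}$, whence $\Tr\bigl(e^{-2tM_{\rho}^{(w)}}\bigr)\le d_{\rho}e^{-2ta\nu_{\rho}}$.

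To finish, apply the Diaconis upper bound lemma to the distribution $q_{t}$ of the continuous-time interchange process with generator $\Delta_{w}$ started from the identity:
\[
4\|q_{t}-\pi\|_{\textrm{TV}}^{2}\le\sum_{\rho\ne\mathbf{1}}d_{\rho}\Tr\bigl(e^{-2tM_{\rho}^{(w)}}\bigr)\le\sum_{\rho\ne\mathbf{1}}d_{\rho}^{2}e^{-2ta\nu_{\rho}}.
\]
The last sum is exactly the quantity bounded by Diaconis--Shahshahani for the continuous-time random transpositions walk $e^{-s\Delta_{K_{n}}}$ evaluated at time $s=at$, and their estimate shows it is less than $\tfrac{1}{16}$ as soon as $s\ge C\log n/n$. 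Hence $\|q_{t}-\pi\|_{\textrm{TV}}<\tfrac{1}{4}$ whenever $t\ge C\log n/(an)$, giving the lemma. The only delicate point is the representation-theoretic transfer, in particular verifying that the operator inequality on $L^{2}(S_{n})$ really restricts to each isotypic block; the analytical content is fully packaged in Diaconis--Shahshahani, and the lemma is essentially the observation that the operator inequality transports that bound from $\Delta_{K_{n}}$ to $\Delta_{w}$.
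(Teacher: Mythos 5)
Your argument is correct, but it takes a genuinely different route from the paper. The paper never touches the upper bound lemma: it first uses the centrality of $\Delta_{K_{n}}$ to turn $\Delta_{w}\ge a\Delta_{K_{n}}$ into $e^{-s\Delta_{w}}\le e^{-sa\Delta_{K_{n}}}$, then feeds this into the Diaconis--Saloff-Coste comparison inequality (\cite[(2.10)]{DSC93}) for the discrete walks with steps $e^{-s\Delta_{w}}$, and passes to the limit $s\to0$, $k\to\infty$ with $t=sk$ to get $\|p_{t}^{w}-\pi\|^{2}\le\|p_{at/4}^{K_{n}}-\pi\|^{2}$, so that only the known total-variation mixing time $\imix K_{n}\approx\frac{\log n}{n}$ of \cite{DS81} is needed as a black box. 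You instead restrict the operator inequality to each isotypic block (the same step the paper carries out later, cf.\ (\ref{eq:in one rep}) in the proof of theorem \ref{thm:Bati}), deduce $\lambda_{j}(w,\rho)\ge a\nu_{\rho}$, and plug this into the upper bound lemma, so that the analytic input is the Diaconis--Shahshahani bound on $\sum_{\rho\ne[n]}d_{\rho}^{2}e^{-2s\nu_{\rho}}$. This is a perfectly sound and arguably more direct proof, and it has the bonus of bounding the $l^{2}$ distance, not just total variation --- a point the paper itself makes separately in its discussion of the chameleon process. Two small things to tighten: (i) the paper's $\imix$ is a maximum over starting states, so note that by translation invariance of a convolution walk the distance from any start equals the distance from the identity; (ii) \cite{DS81} states its estimate (their (3.1)) for the discrete-time walk, so the claim that the sum is small at $s\ge C\log n/n$ is the standard continuous-time adaptation of their character computations rather than a literal quotation --- the same granularity of citation the paper allows itself, but worth flagging. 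The trade-off between the two proofs: yours redoes (or re-cites) the representation-theoretic estimate in continuous time but is self-contained at the level of eigenvalues; the paper's is softer, needing only the $K_{n}$ mixing time and the general comparison machinery, at the cost of the discrete-to-continuous limiting argument.
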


\begin{proof}
This result is an easy corollary of a comparison argument which can
be found in equation (2.10) in \cite{DSC93}, and the determination
of $\imix K_{n}$ by Diaconis and Shahshahani \cite{DS81}. Let us
give some details. First we need to translate the inequality $\Delta_{w}\ge a\Delta_{K_{n}}$
to exponentials. Now, in general one cannot conclude from an operator
inequality $A\ge B$ that $e^{-A}\le e^{-B}$ (see \cite{K56} for
more on this), but in our case the operators $\Delta_{w}$ and $\Delta_{K_{n}}$
commute. Indeed, $\Delta_{K_{n}}$ is in the center of $S_{n}[\mathbb{C}]$
and commutes with any convolution operator on $S_{n}$. For commuting
operators, number inequalities translate directly to operator inequalities
(simply by taking the joint diagonalisation), so $\Delta_{w}\ge a\Delta_{K_{n}}$
implies $e^{-s\Delta_{\omega}}\le e^{-sa\Delta_{K_{n}}}$ for any
$s>0$.

Let us now explain the notations of \cite{DSC93}. Their setting is
that of discrete time random walk on $S_{n}$, so we will apply their
results for the walk that does steps of $e^{s\Delta_{w}}$ for some
small $s>0$. Quoting \cite[(2.10)]{DSC93} literally, it states that
``if $\mathcal{\tilde{E}}\le2\mathcal{E}$ and $\tilde{\mathcal{F}}\le2\mathcal{F}$
then for all $k$, $||p_{k}-\pi||^{2}\le e^{-k/2}+||\tilde{p}_{k/4}-\pi||^{2}$''.
The notations $\mathcal{E}$ and $\tilde{\mathcal{E}}$ stand for
our operators $1-e^{-s\Delta_{w}}$ and $1-e^{-sa\Delta_{K_{n}}}$
so $\tilde{\mathcal{E}}\le\mathcal{E}$ without the need for the $2$.
The notations $\mathcal{F}$ and $\tilde{\mathcal{F}}$ stand for
the operators $1+e^{-s\Delta_{w}}$ and $1+e^{-sa\Delta_{K_{n}}}$.
In the setting of \cite{DSC93} one has to care also about periodicity,
but in our setting this is automatic: $\mathcal{F}$ and $\tilde{\mathcal{F}}$
are both bounded between 1 and 2, so $\tilde{\mathcal{F}}\le2\le2\mathcal{F}$.
So the conditions are satisfied. $p_{k}$ and $\tilde{p}_{k}$ are
the distributions after $k$ steps of the discrete walk, which is
just the distributions of our continuous time walk after $sk$ steps.
So we get
\[
||p_{sk}^{w}-\pi||^{2}\le e^{-k/2}+||p_{ask/4}^{K_{n}}-\pi||^{2}.
\]
Taking $s\to0$ and $k\to\infty$ while preserving $t=sk$ gives
\[
||p_{t}^{w}-\pi||^{2}\le||p_{at/4}^{K_{n}}-\pi||^{2}
\]
which of course implies that 
\[
\imix w\le\frac{4}{a}\imix K_{n}.
\]
Finally \cite{DS81} shows that $\imix K_{n}\approx\frac{\log n}{n}$,
proving the lemma (the number stated in \cite{DS81}, $\frac{1}{2}n\log n$,
has to be divided by $\binom{n}{2}$, the number of edges of $K_{n}$,
because of, yet again, the difference between discrete and continuous
time).
\end{proof}
\begin{cor}
\label{cor:mixing}For every weight function $w$,
\[
\imix w\le\frac{\lmix w}{c\delta}\frac{w_{\mathrm{tot}}}{n\min w_{i}^{2}}\log n.
\]
with $\delta$ given by either (\ref{eq:def delta}) or its estimates
in theorem \ref{thm:easy}.
\end{cor}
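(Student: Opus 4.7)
The plan is to chain the operator inequality from Theorem~\ref{thm:easy} through the preceding lemma, which is designed precisely for this step: any bound of the form $\Delta_w \ge a\Delta_{K_n}$ yields $\imix w \le (C/a)(\log n)/n$. So I would first read off from (\ref{eq:thm easy}) the value
\[
a = \frac{c\delta}{\lmix w}\cdot\frac{\min_i w_i^2}{w_{\text{tot}}},
\]
with $\delta$ either the full product from (\ref{eq:def delta}) (via Theorem~\ref{thm:main}) or one of the simpler estimates in clauses (1)--(3) of Theorem~\ref{thm:easy}. By whichever of these two theorems one invokes, this $a$ is a legitimate choice for the hypothesis of the preceding lemma.

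Substituting this $a$ into the conclusion of the lemma gives
\[
\imix w \le \frac{C}{a}\cdot\frac{\log n}{n} = \frac{C\,\lmix w}{c\,\delta}\cdot\frac{w_{\text{tot}}}{n\min_i w_i^2}\cdot\log n,
\]
and merging the ratio $C/c$ of universal constants into a single $c$ (as the paper routinely does) yields the statement of the corollary verbatim. The corresponding versions with $\delta$ replaced by $(\min^{*}w_{ij}/\max w_i)^2$ or by the constant $1$ (in the regular case) follow by the same substitution, using clauses~(1) and~(2) of Theorem~\ref{thm:easy}.

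There is essentially no obstacle — this is a one-line composition of two already-established results. The only detail worth flagging is connectedness of $w$: if $w$ is disconnected then $\lmix w = \infty$ and the right-hand side is vacuous, which is consistent with $\imix w$ itself being infinite, so one may assume $w$ is connected without loss of generality.
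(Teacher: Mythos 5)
Your proposal is correct and is exactly the paper's intended argument: the corollary is stated immediately after the lemma precisely because it follows by substituting $a=\frac{c\delta}{\lmix w}\frac{\min_i w_i^2}{w_{\mathrm{tot}}}$ from Theorem \ref{thm:easy} (or \ref{thm:main}) into the lemma's bound $\imix w\le\frac{C}{a}\frac{\log n}{n}$. The remark about the disconnected case is a fine (if optional) sanity check.
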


\subsection*{The chameleon process}

Invented by Ben Morris \cite{M06}, the chameleon process has been
used with great success to bound mixing times of interacting particle
systems \cite{O13,HP}. Let us therefore compare the two approaches.
This section is somewhat geared towards mixing time aficionados, so
some key terms will not be fully explained.

The approach of this paper has two advantages when compared to the
chameleon process. The first is that the chameleon process only bounds
the mixing time of the exclusion process. The exclusion process is
similar to the interchange process, but the marbles have only two
colours, so it is a process on a state space of size $\binom{n}{k}$
for some $k\in\{1,\dotsc,n\}$. The approach works even with multiple
colours, but always some proportion of the marbles (i.e.\ at least
$cn$ for some $c>0$) must be indistinguishable. The second advantage
is that, in fact, we have bounded the $l^{2}$ mixing time and not
just the total variation mixing time (see e.g.\ \cite[definition 2.2]{B16}
for the $l^{2}$ mixing time). We formulated corollary \ref{cor:mixing}
for the total variation time because this is how Diaconis and Shahshahani
formulate their result for the interchange process on $K_{n}$ (and,
of course, the total variation time is the most common notion of mixing
time in the literature, often called \emph{the} mixing time). But
in fact they bound the $l^{2}$ mixing time (see their \cite[formula (3.1)]{DS81}),
and this was already noted in the literature \cite[theorem 6.9]{B16}.
Further, an operator inequality implies a comparison of $l^{2}$ mixing
time \cite[theorem 4.3]{B16}. The chameleon process bounds the total
variation time, and it does not seem easy to change this fact.

Contrariwise, some of the results using the chameleon process give
better bounds than ours. Examine, for example, the results of Oliveira
\cite{O13}. Denote by $\ctvmix w$ the mixing time of continuous
time random walk with respect to the weight function $w$, and by
$\ectvmix w$ the same thing but for exclusion process with weight
function $w$ (i.e.\,again in continuous time and with respect to
the total variation distance). Then Oliveira showed that
\[
\ectvmix w\le C\ctvmix w\log n
\]
without all our extra factors. Further, Hermon and Pymar show \cite{HP}
that this bound is not always tight. For example, for an expander
they show $\ectvmix w\le\log n\log\log n$, again using the chameleon
process.

We finish this section with a few strengthenings of results of \cite{J12},
which studied the interchange process using operator comparisons (and
not via the chameleon process). Since the results of \cite{J12} are
formulated in discrete time, one has to scale by the total number
edges to compare: in corollaries \ref{cor:tree} and \ref{cor:expander}
multiply by $n$ and in corollary \ref{cor:hypercube} by $n\log n$
to get discrete time results. In corollary \ref{cor:tree} our bound
matches the lower bound given in \cite{J12}.
\begin{cor}
\label{cor:tree}For $G$ being a finite regular tree of degree $d$,
\[
\imix G\le C(d)n\log n.
\]
\end{cor}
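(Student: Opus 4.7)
The plan is to apply Corollary \ref{cor:mixing} directly. For any $d$-regular graph we have $w_{\text{tot}} = nd$ and $\min_i w_i = d$, so
\[
\frac{w_{\text{tot}}}{n\min_i w_i^2} = \frac{1}{d},
\]
and moreover clause 2 of Theorem \ref{thm:easy} gives $\delta \ge c$. Plugging these into Corollary \ref{cor:mixing} yields
\[
\imix G \le \frac{C}{d}\,\lmix G \cdot \log n,
\]
so it suffices to prove the discrete-time bound $\lmix G \le C(d)\,n$ for any finite regular tree $G$ of degree $d$.

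The bound $\lmix G = O_d(n)$ is a classical fact about lazy simple random walk on a regular tree. When $d \ge 3$ the walk has a constant-size drift away from any fixed vertex, and a direct argument shows that the maximum expected hitting time $\max_{i,j}\mathbb{E}_i[T_j]$ is linear in $n$. Combining this with the standard inequality $\lmix G \le C\max_{i,j}\mathbb{E}_i[T_j]$, valid for lazy reversible chains, gives the required estimate. Alternatively one can simply cite the corresponding bound from \cite{J12}.

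The main obstacle, such as it is, lies entirely in securing the input $\lmix G = O_d(n)$; the rest of the argument is a one-line application of our main theorem, rolled up inside Corollary \ref{cor:mixing}. The payoff is that the matching lower bound $\imix G \ge c(d)\,n\log n$ established in \cite{J12} shows that the extra $\log n$ factor picked up from Corollary \ref{cor:mixing} (which in turn inherits it from the mixing time of $\Delta_{K_n}$) is genuinely necessary, so our bound is tight in its dependence on $n$.
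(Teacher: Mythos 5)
Your overall strategy is the same as the paper's: feed Corollary \ref{cor:mixing} with a bound $\lmix G=O_d(n)$. But both of the concrete steps you propose have problems. First, a finite regular tree of degree $d$ is not a $d$-regular graph: its leaves have degree $1$, so $\min_i w_i=1$ and $w_{\mathrm{tot}}=2(n-1)$, not $d$ and $nd$, and clause 2 of Theorem \ref{thm:easy} does not apply. This is repairable \textemdash{} clause 1 gives $\delta\ge c/d^{2}$ and the prefactor becomes $\frac{w_{\mathrm{tot}}}{n\min_i w_i^{2}}\approx 2$, which is harmless since the constant is allowed to depend on $d$ \textemdash{} but your displayed factor $1/d$ is not justified as written.

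The genuine gap is the input $\lmix G=O_d(n)$. Your claimed route is false: the maximum expected hitting time on the balanced $d$-regular tree is \emph{not} linear in $n$. By the commute-time identity, for the root $u$ and a leaf $v$ at depth $h\approx\log_{d-1}n$ one has $\mathbb{E}_u T_v+\mathbb{E}_v T_u=2(n-1)\dist(u,v)\approx 2n\log_{d-1}n$, so $\max_{i,j}\mathbb{E}_i T_j\ge c(d)\,n\log n$. (The drift heuristic only controls the leaf-to-root direction; hitting a \emph{specific} leaf from the root is the expensive one.) Consequently the inequality $\lmix\le C\max_{i,j}\mathbb{E}_i T_j$ only yields $\lmix G=O_d(n\log n)$, and plugging that into Corollary \ref{cor:mixing} gives $\imix G\le C(d)n\log^{2}n$, one $\log$ short of the statement. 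What is actually needed is the finer fact that the total-variation mixing time of the finite regular tree is of order $n$ \textemdash{} this is exactly what the paper invokes, \cite[example 5.14]{AF}, combined with Lemma \ref{lem:hashevakh lealdous} to convert it into a bound on $\lmix$ \textemdash{} and it does not follow from hitting-time estimates alone. The fallback of ``citing \cite{J12}'' does not fill this hole either: \cite{J12} supplies the matching \emph{lower} bound $c(d)n\log n$ for the interchange process, not the $O(n)$ single-particle mixing bound.
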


\begin{proof}
The mixing time of a regular tree is of order $n$, see \cite[example 5.14]{AF}.
\end{proof}
\begin{cor}
\label{cor:expander}For $G$ being a bounded degree expander, $\imix G\le C\log^{2}n$.
\end{cor}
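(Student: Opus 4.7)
The plan is to apply Corollary \ref{cor:mixing} with the two favourable simplifications that an expander provides: regularity (so that clause 2 of Theorem \ref{thm:easy} gives $\delta\ge c$) and the spectral gap (so that $\lmix w$ is logarithmic).

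First I would unpack the constants. Let $G$ be $d$-regular with $d=O(1)$ and let $w$ be the indicator weight function of its edges. Then $w_i=d$ for every $i$, so $w_{\text{tot}}=nd$ and
\[
\frac{w_{\text{tot}}}{n\min_i w_i^{2}}=\frac{nd}{n\,d^{2}}=\frac{1}{d}=O(1).
\]
By clause 2 of Theorem \ref{thm:easy}, the regularity of $G$ gives $\delta\ge c$, so the $1/\delta$ factor in Corollary \ref{cor:mixing} is absorbed into the universal constant.

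Next I would bound $\lmix w$. The lazy discrete time simple random walk on a bounded degree expander satisfies $\lmix w\le C\log n$; this is a standard consequence of the spectral gap being bounded away from $0$ (equivalently, Cheeger's inequality together with the expander property), and via Lemma \ref{lem:hashevakh lealdous} the same estimate holds for any of the equivalent notions of discrete mixing time used here. Plugging $\lmix w\le C\log n$ into Corollary \ref{cor:mixing} yields
\[
\imix G\le\frac{C\log n}{c\cdot c}\cdot O(1)\cdot\log n\le C\log^{2}n,
\]
as required.

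The only non-routine step is the bound $\lmix w=O(\log n)$ on the lazy walk, but for expanders this is classical and can simply be quoted; all other pieces are direct substitutions into results already proved in the paper.
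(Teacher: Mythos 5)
Your proposal is correct and is exactly the argument the paper intends (the paper leaves this corollary without an explicit proof, but it follows the same pattern as Corollaries \ref{cor:tree} and \ref{cor:hypercube}): $\lmix G\le C\log n$ for an expander, $\delta\ge c$, and $w_{\mathrm{tot}}/(n\min_i w_i^2)=O(1)$ plugged into Corollary \ref{cor:mixing}. The only cosmetic point is that a bounded degree expander need not be regular, in which case one invokes clause 1 of Theorem \ref{thm:easy} (giving $\delta\ge c/d^{2}\ge c$) instead of clause 2, with no change to the conclusion.
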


\begin{cor}
\label{cor:hypercube}For $G=(\mathbb{Z}/2)^{d}$ (a.k.a.\ the hypercube),
\[
\imix G\le C\log n\log\log n.
\]
\end{cor}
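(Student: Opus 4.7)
The plan is to read the bound straight off Corollary \ref{cor:mixing}, which for regular graphs reduces to a very clean estimate, and then to quote the classical mixing time of the lazy simple random walk on the hypercube. Write $n=2^{d}$ for the number of vertices of $G=(\mathbb{Z}/2)^{d}$, so that $d=\log_{2}n$. Because $G$ is $d$-regular, every $w_{i}$ equals $d$ and $w_{\mathrm{tot}}=nd$, so the combinatorial factor in Corollary \ref{cor:mixing} simplifies to $w_{\mathrm{tot}}/(n\min w_{i}^{2})=1/d$. Regularity also activates clause~2 of Theorem \ref{thm:easy}, giving a universal lower bound $\delta\ge c$.

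Substituting these into Corollary \ref{cor:mixing} collapses it to
\[
\imix G\;\le\;\frac{C\,\lmix G\cdot\log n}{d}.
\]
The only input that remains is $\lmix G$. It is classical that the $\tfrac12$-lazy simple random walk on the $d$-dimensional hypercube mixes in total variation in time $\Theta(d\log d)$ — see e.g.\ \cite[Theorem~18.3]{LPW09}; for our purposes only the upper bound, which is the routine coupon-collector direction, is needed. Combined with Lemma \ref{lem:hashevakh lealdous} this gives $\lmix G=O(d\log d)$.

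Plugging $d=\log_{2}n$ and $\lmix G=O(d\log d)$ into the displayed inequality yields
\[
\imix G\;\le\;\frac{C(d\log d)\log n}{d}\;=\;C\log d\cdot\log n\;=\;C\log n\cdot\log\log n,
\]
which is the claimed bound. There is no real obstacle here beyond citing the hypercube mixing time; the heavy lifting has already been done in Theorem \ref{thm:main} and Corollary \ref{cor:mixing}, and the proof runs in parallel with those of Corollaries \ref{cor:tree} and \ref{cor:expander}.
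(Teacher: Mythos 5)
Your proposal is correct and follows essentially the same route as the paper: invoke clause 2 of Theorem \ref{thm:easy} (regularity gives $\delta\ge c$), plug the classical $O(d\log d)$ lazy mixing time of the hypercube walk into Corollary \ref{cor:mixing}, and simplify using $d=\log_2 n$. The only difference is cosmetic (citing \cite{LPW09} together with Lemma \ref{lem:hashevakh lealdous} instead of the Aldous--Fill reference used in the paper), so nothing further is needed.
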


\begin{proof}
The discrete-time mixing time of $(\mathbb{Z}/2)^{d}$ is $d\log d$,
see \cite[example 5.15, equation (5.71)]{AF}. We bound $\delta$
using clause 2 of theorem \ref{thm:easy}.
\end{proof}
For the hypercube, Wilson proved a lower bound of $\log n$ and conjectured
that it is the correct value \cite[\S 9.1]{W04}. For the exclusion
process Wilson's conjecture was proved recently in \cite{HP}.

\subsection{\label{subsec:cycles}Appearance of large cycles}

We now leave the question of when the interchange process mixes completely
and study a different question: when do large cycles first appear?
Here is the corresponding comparison result:
\begin{thm}
\label{thm:Bati}If $\Delta_{w}\ge a\Delta_{K_{n}}$ then the interchange
process at time $\ge C/an$ satisfies 
\[
\mathbb{P}(\exists\text{cycle of length}>n/2)>c.
\]
\end{thm}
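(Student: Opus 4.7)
My plan is to reduce Theorem \ref{thm:Bati} to Schramm's emergence-of-giant-cycle theorem on $K_n$. Schramm (later simplified by Berestycki, Kozma and others) proved that for the interchange process on $K_n$ in our normalisation there is a universal constant $C_0$ such that at time $s_0 = C_0/n$ the probability of a cycle of length $>n/2$ is bounded below by some $c_0 > 0$. It then suffices to show that the same event has positive probability at time $s_0/a = C/(an)$ for the $w$-interchange.

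To set up the transfer, I would exploit the centrality of $\Delta_{K_n}$ in $\mathbb{C}[S_n]$: $\Delta_w$ and $\Delta_{K_n}$ commute, so the operator inequality $\Delta_w \ge a\Delta_{K_n}$ descends on each irreducible representation $V_\mu$ of $S_n$ to $\rho_\mu(\Delta_w) \ge a\alpha_\mu I$, where $\alpha_\mu$ is the (scalar) eigenvalue of $\Delta_{K_n}$ on $V_\mu$. Taking traces of the corresponding semigroups and decoding via Peter--Weyl yields, for every irrep $\mu$,
\[
0 \;\le\; \mathbb{E}_w[\chi_\mu(\sigma_s)] \;=\; \mathrm{tr}\bigl(e^{-s\rho_\mu(\Delta_w)}\bigr) \;\le\; \dim(\mu)\, e^{-sa\alpha_\mu} \;=\; \mathbb{E}_{K_n}[\chi_\mu(\sigma_{as})],
\]
so at $s = s_0/a$ the character expectations of the $w$-process are dominated by those of the $K_n$-process at time $s_0$, while staying non-negative.

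The last step is to promote this character-level comparison to a lower bound on $\mathbb{P}_w(\exists\text{cycle}>n/2)$. I expect the hard step to be exactly here: the indicator of having a large cycle is a class function whose Fourier coefficients have mixed signs (one can check this already at $n=4$ against $\chi_{(2,2)}$ and the sign character), so the term-by-term comparison fails for the indicator itself. The way around is to replace the indicator by a proxy class function $\tilde f$ that lower-bounds it (up to a controlled constant) and whose Fourier coefficients have signs compatible with the character bound. Natural candidates are polynomials in the characters of exterior powers $\wedge^k\mathbb{C}^n$, or combinations of the power traces $\#\{x:\sigma^k(x)=x\}$, which encode cycle-length statistics through Newton's identities and are naturally associated to positive-definite combinations of irreps. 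Once such a proxy is in place, applying the character bound to $\tilde f$ and invoking Schramm on the $K_n$ side closes the argument; the essential representation-theoretic work is in the construction and analysis of $\tilde f$.
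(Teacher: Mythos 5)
Your reduction stops exactly where the proof has to start: the ``construction and analysis of $\tilde f$'' that you defer is the entire content of the theorem, and as stated your plan has a genuine gap there. The paper fills it not by finding a sign-compatible proxy and invoking Schramm, but by using an exact identity (lemma 5 of \cite{AK13}, quoted as (\ref{eq:sk decomposition})): the expected number $s_k(t)$ of $k$-cycles equals $\frac1k\sum_\rho a_\rho\sum_j e^{-t\lambda_j(w,\rho)}$, where the coefficients $a_\rho\in\{0,\pm1\}$ are supported on a short explicit list of hook-like representations. The trivial representation contributes the main term $\frac1k$, and for $k\in[\frac12 n,\frac34 n]$ every other representation with $a_\rho\ne0$ has $\lambda(K_n,\rho)\ge cn^2$ and dimension at most $4^n n^2$; combining with $\lambda_j(w,\rho)\ge a\lambda(K_n,\rho)$ (your inequality (\ref{eq:in one rep}), which you derived correctly) shows that at $t\ge C/(an)$ the whole non-trivial part is at most $4^n n^3 e^{-actn^2}$, negligible \emph{in absolute value} -- so the mixed signs you worry about never need to be tamed, they are simply dominated. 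This gives $\mathbb{E}(s_k)\ge\frac1{2k}$, and since a permutation has at most one cycle longer than $n/2$, the indicator of a large cycle is exactly $\sum_{k>n/2}s_k$ and the expectations sum to the probability bound. In other words, the proxy you hope for exists and is just the number of cycles with length in $[\frac12 n,\frac34 n]$; what you are missing is its Fourier expansion, which is the imported result.

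Note also that the detour through Schramm's theorem on $K_n$ is both unnecessary and hard to exploit: your character comparison is one-sided (an upper bound on $\mathbb{E}_w[\chi_\mu]$ plus positivity), and Schramm's probabilistic statement about $K_n$ does not feed into such termwise bounds. In the paper's argument the $K_n$ process never appears as a process at all -- only the scalars $\lambda(K_n,\rho)$ do, as a lower bound for the $w$-eigenvalues -- and the constant $\frac1k$ coming from the trivial representation plays the role you wanted Schramm's theorem to play.
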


\begin{proof}
The proof uses the representation theory of $S_{n}$. As this note
is quite short it would not be possible to give all the necessary
background, consult \cite{AK13} for connections specific to the interchange
process, or any textbook on the subject, e.g.\ \cite{JK81}. In short,
an irreducible representation $\rho$ is a subspace of $L^{2}(S_{n})$
which is preserved by any convolution operator, in particular by $\Delta_{w}$
and $\Delta_{K_{n}}$ (and is minimal in that regard). The restriction
of $\Delta_{w}$ to the subspace $\rho$ is diagonalisable (recall
that $\Delta_{w}$ is self-adjoint). Let us denote its eigenvalues
by 
\[
\lambda_{1}(w,\rho)\le\dotsb\le\lambda_{\dim\rho}(w,\rho).
\]
In the case of the complete graph, $\Delta_{K_{n}}$ restricted to
$\rho$ is a scalar matrix (this is an easy corollary of Schur's lemma),
and its value was calculated by Diaconis and Shahshahani \cite{DS81}.
In other words $\lambda_{i}(K_{n},\rho)$ are all equal \textemdash{}
let us denote the common value by $\lambda(K_{n},\rho)$. The inequality
$\Delta_{w}\ge a\Delta_{K_{n}}$ restricts, of course, to any irreducible
representation, and from it and the scalarity of $\Delta_{K_{n}}$
we get
\begin{equation}
\lambda_{i}(w,\rho)\ge a\lambda(K_{n},\rho).\label{eq:in one rep}
\end{equation}
The final element in the proof is a formula connecting $\lambda_{i}(w,\rho)$
with the probability of large cycles \cite{AK13}. To state it, let
us recall that irreducible representations of $S_{n}$ are indexed
by partitions of $n$, i.e.\ numbers $\sigma_{1}\ge\dotsb\ge\sigma_{r}$
such that $\sum\sigma_{i}=n$. We denote by $[\sigma_{1},\dotsc,\sigma_{r}]$
the irreducible representation corresponding to the partition $\sigma$,
and if a certain number repeats more than once we denote it by a superscript,
namely $[3,1^{3}]$ is the partition $6=3+1+1+1$. We may now state
the results of \cite{AK13}. Fix $k\in\{1,\dotsc,n\}$ and denote
by $s_{k}(t)$ the number of cycles of length $k$ in the interchange
process corresponding to $w$ at time $t$. Then lemma 5 of \cite{AK13}
states that
\begin{equation}
\mathbb{E}(s_{k}(t))=\frac{1}{k}\sum_{\rho}a_{\rho}\sum_{j=1}^{\dim\rho}e^{-t\lambda_{j}(w,\rho)}\label{eq:sk decomposition}
\end{equation}
where
\[
a_{\rho}=\begin{cases}
1 & \rho=[n]\\
(-1)^{i+1} & \rho=[k-i-1,n-k+1,1^{i}]\text{ for }i\in\{0,\dotsc,2k-n-2\}\\
(-1)^{i} & \rho=[n-k,k-i,1^{i}]\text{ for }i\in\{\max\{2k-n,0\},\dotsc,k-1\}\\
0 & \text{otherwise.}
\end{cases}
\]
Now, $[n]$ is the one dimensional space of constant functions, so
$\lambda_{1}(w,[n])=0$. Our proof strategy will be to show that the
corresponding term in the sum, $\frac{1}{k}$, is the main term and
all the others are negligible. Hence we need estimates for $\dim\rho$
and for $\lambda(K_{n},\rho)$. Both go back to the early 20th century,
but a convenient reference would be \cite{BK15}. From \cite[\S 2.3]{BK15}
we get:
\begin{align*}
\lambda(K_{n},[k-i-1,n-k+1,1^{i}]) & =\binom{n}{2}+ik+k-\frac{1}{2}((n-k)^{2}+k^{2}-n)\\
\dim([k-i-1,n-k+1,1^{i}]) & =\frac{n!(2k-n-i-1)}{i!k(n-k)!(k-i-1)!(n-k+i+1)}
\end{align*}
while from \cite[equations 8 \& 9]{BK15} we get essentially the same
formulas for the second family of representations
\begin{align*}
\lambda(K_{n},[n-k,k-i,1^{i}]) & =\binom{n}{2}+ik+k-\frac{1}{2}((n-k)^{2}+k^{2}-n)\\
\dim([n-k,k-i,1^{i}]) & =\frac{n!(n-2k+i+1)}{i!k(n-k)!(k-i-1)!(n-k+i+1)}
\end{align*}
(a forthcoming paper of ours \cite{AK} will partially explain this
similarity). We see that in both cases we have that the dimension
is $\binom{n}{k}\binom{k}{i}$ times a rational factor which can be
bounded roughly by $n^{2}$. So $\dim\rho\le4^{n}n^{2}$ for all relevant
representations. As for $\lambda$, we restrict our attention to $k\in[\frac{1}{2}n,\frac{3}{4}n]$
and get that in all cases $\lambda\ge cn^{2}$. This means that for
any $\rho\ne[n]$ for which $a_{\rho}\ne0$ we have 
\[
\sum_{j=1}^{\dim\rho}\exp(-t\lambda_{j}(w,\rho))\stackrel{\textrm{(\ref{eq:in one rep})}}{\le}\sum_{j=1}^{\dim\rho}\exp(-at\lambda(K_{n},\rho))\le4^{n}n^{2}\exp(-actn^{2})
\]
so
\[
\Big|\mathbb{E}(s_{k}(t))-\frac{1}{k}\Big|=\Big|\sum_{\rho\ne[n]}a_{\rho}\sum_{j=1}^{\dim\rho}e^{-t\lambda_{j}(w,\rho)}\Big|\le4^{n}n^{3}\exp(-actn^{2}).
\]
Taking $t>C/an$ for some $C$ sufficiently large makes the right
hand side negligible, and we get that $\mathbb{E}(s_{k}(t))\ge\frac{1}{2k}$,
for all $k\in[\frac{1}{2}n,\frac{3}{4}n]$. Since $k>\frac{1}{2}n$,
$s_{k}$ may only take the values 0 and $1$ and $\mathbb{E}(s_{k})=\mathbb{P}(s_{k}=1)$.
Summing over $k\in[\frac{1}{2}n,\frac{3}{4}n]$ gives the theorem. 
\end{proof}
\begin{cor}
For the 2 dimensional Hamming graph, at time $t>C/\sqrt{n}$ we have
cycles larger than $n/2$ with positive probability.
\end{cor}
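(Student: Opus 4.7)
The plan is to apply theorem \ref{thm:Bati} with $a \asymp 1/\sqrt{n}$ (writing $n = m^2$), which requires the operator inequality $\Delta_{H(2,m)} \ge c\Delta_{K_n}/\sqrt{n}$. Since the Hamming graph is $d$-regular with $d = 2(m-1)$, clause 2 of theorem \ref{thm:easy} yields
\[
\Delta_{H(2,m)} \ge \frac{c}{\lmix H(2,m)} \cdot \frac{d}{n}\Delta_{K_n} \asymp \frac{c}{\sqrt{n}\,\lmix H(2,m)} \Delta_{K_n},
\]
so the whole corollary reduces to showing $\lmix H(2,m) = O(1)$.

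For the mixing estimate I would use the product structure $H(2,m) = K_m \Box K_m$: conditional on the multinomially-distributed numbers $t_1, t_2$ of non-lazy moves in each coordinate, the two coordinates of the lazy walk on $H(2,m)$ evolve as independent (non-lazy) walks on $K_m$. Substituting the explicit transition kernel $p^{K_m}_s(i,j) = (1-\alpha^s)/m + \alpha^s \mathbf{1}_{i=j}$ (with $\alpha = -1/(m-1)$) into this conditional independence gives a closed form for $p_t$ on $H(2,m)$. The tightest case of $p_t \ge \tfrac{3}{4}\pi$ is a pair $(i,j),(i',j')$ at full Hamming distance ($i \ne i'$ and $j \ne j'$), where
\[
\frac{p_t((i,j),(i',j'))}{\pi} = 1 - 2\beta^t + \gamma^t, \qquad \beta = \frac{3m-4}{4(m-1)},\ \gamma = \frac{m-2}{2(m-1)},
\]
and both $\beta \le 3/4$ and $\gamma \le 1/2$ uniformly in $m$, so $2\beta^t - \gamma^t < 1/4$ already at some absolute constant $t$; hence $\lmix H(2,m) = O(1)$.

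Combining, $\Delta_{H(2,m)} \ge c\Delta_{K_n}/\sqrt{n}$, and theorem \ref{thm:Bati} delivers cycles of length $>n/2$ with positive probability at all times $t \ge C/(an) = C/\sqrt{n}$. The only nontrivial input is the $\lmix = O(1)$ estimate; it may look surprising next to the customary $\log n$ lower bounds for mixing on $n$-vertex graphs, but $\lmix$ is only a pointwise criterion relative to the uniform stationary measure, and on a symmetric diameter-two graph like $H(2,m)$ this is satisfied after a bounded number of steps. An alternative, more self-contained route avoiding mixing times altogether is to apply the octopus inequality once at every vertex and sum: the count of common Hamming-neighbours ($m-2$ for adjacent pairs, $2$ for non-adjacent) leads directly to $3m\,\Delta_{H(2,m)} \ge 2\Delta_{K_n}$, with the same consequence.
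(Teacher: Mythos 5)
Your main route is the same as the paper's: the paper also applies clause 2 of Theorem \ref{thm:easy} to the $2(m-1)$-regular Hamming graph together with Theorem \ref{thm:Bati}, asserting (without computation) that the mixing time is bounded, to get $\Delta_w\ge\frac{c}{m}\Delta_{K_n}$ and hence cycles at time $C/\sqrt n$. Your eigenvalue computation correctly supplies the step the paper leaves implicit: the nontrivial eigenvalues of the lazy chain are exactly $\beta=\frac{3m-4}{4(m-1)}\le\frac34$ and $\gamma=\frac{m-2}{2(m-1)}\le\frac12$, the antipodal pair is indeed the binding case with $p_t/\pi=1-2\beta^t+\gamma^t$, and one also needs (easily checked) that the other cases, e.g.\ $i=i'$, $j\ne j'$, give $p_t/\pi=1+(m-2)\beta^t-(m-1)\gamma^t\ge1$; so $\lmix=O(1)$ as claimed. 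The alternative you sketch at the end is a genuinely different and arguably cleaner route for this particular graph: applying the octopus inequality once at every vertex and summing, with common-neighbour counts $m-2$ (adjacent pairs) and $2$ (non-adjacent pairs), gives
\[
2\Delta_{G}\ \ge\ \frac{1}{2(m-1)}\Bigl[(m-2)\Delta_{G}+2\bigl(\Delta_{K_{n}}-\Delta_{G}\bigr)\Bigr],
\]
i.e.\ $3m\,\Delta_{G}\ge2\Delta_{K_{n}}$, which is correct and yields $a=\frac{2}{3m}\asymp n^{-1/2}$ with an explicit constant, bypassing Theorems \ref{thm:easy} and \ref{thm:main} (and all mixing-time considerations) entirely; it is in effect a one-step instance of the paper's iterated-octopus argument, made possible because the Hamming graph has diameter two with many common neighbours, whereas the paper's machinery is what one needs for graphs of large diameter.
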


The interchange process on the Hamming graph was studied in \cite{MS16,AKM}
using different methods.
\begin{proof}
The 2 dimensional Hamming graph is the graph given by, for $n=m^{2}$,
\[
w_{((i_{1},i_{2}),(j_{1},j_{2}))}=\begin{cases}
1 & i_{1}=j_{1}\\
1 & i_{2}=j_{2}\\
0 & \text{otherwise}
\end{cases}
\]
(recall that $w$ is not defined for $(i_{1},i_{2})=(j_{1},j_{2})$).
This graph has finite mixing time, $w_{i}=2(m-1)$ and $w_{\textrm{tot}}=2(m^{3}-m^{2})$
so by theorem \ref{eq:thm easy} $\Delta_{w}\ge\frac{c}{m}\Delta_{K_{n}}$.
By theorem \ref{thm:Bati}, we get large cycles at time $C/\sqrt{n}$,
as claimed.
\end{proof}
Theorem \ref{thm:Bati} has an analog for the quantum Heisenberg ferromagnet
(QHF for short). Rather than defining the QHF, we apply T\'oth's
representation \cite{Toth}, and get that the magnetisation of the
QHF can be calculated by considering the cycles of the interchange
process with a certain weighting. Precisely, let $\alpha_{k}(t)$
be the number of cycles of length $k$ at time $t$ and let $\alpha(t)=\sum_{k}\alpha_{k}(t)$.
Then the magnetisation, $m$, is defined by
\[
Z(t)=\mathbb{E}(2^{\alpha(t)})\qquad m^{2}(t)=\frac{1}{Z(t)}\mathbb{E}\Big(\Big(\sum_{k}k^{2}\alpha_{k}(t)\Big)2^{\alpha(t)}\Big).
\]
 (For the relation to physics we refer to \cite{Toth}, but basically
it is a quantum model with spin interactions. The graph over which
we perform the interchange process describes the inter-particle interactions,
and the time $t$ translates to the inverse of the temperature).

Theorem \ref{thm:QHF} below is the analog of theorem \ref{thm:Bati}
for the quantum Heisenberg ferromagnet. It is proved identically,
replacing the results of \cite{BK15} with the results of \cite{AK},
at this time still in preparations. 
\begin{thm}
\label{thm:QHF}If $\Delta_{w}\ge a\Delta_{K_{n}}$ then the quantum
Heisenberg ferromagnet at $t\ge C/an$ satisfies $m\ge cn$.
\end{thm}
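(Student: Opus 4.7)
The plan is to mimic the proof of theorem \ref{thm:Bati} essentially line by line, the only substantive change being to replace the cycle-counting identity (\ref{eq:sk decomposition}) and the dimension/eigenvalue estimates from \cite{BK15} by their analogs in the forthcoming \cite{AK} announced in the excerpt. Concretely, T\'oth's representation writes both the partition function $Z(t)$ and the numerator $N(t):=\mathbb{E}\bigl(\bigl(\sum_{k}k^{2}\alpha_{k}(t)\bigr)2^{\alpha(t)}\bigr)$ of $m^{2}(t)$ as traces of convolution operators on $L^{2}(S_{n})$, which under the isotypic decomposition take the shape
\[
Z(t)=\sum_{\rho}b_{\rho}\sum_{j=1}^{\dim\rho}e^{-t\lambda_{j}(w,\rho)},\qquad N(t)=\sum_{\rho}c_{\rho}\sum_{j=1}^{\dim\rho}e^{-t\lambda_{j}(w,\rho)}
\]
for explicit combinatorial coefficients $b_{\rho},c_{\rho}$ which, like the $a_{\rho}$ of (\ref{eq:sk decomposition}), vanish outside a small family of hook-plus-corner partitions supplied by \cite{AK}.

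Once these formulas are in hand the argument is routine. Restricting $\Delta_{w}\ge a\Delta_{K_{n}}$ to each irreducible component gives $\lambda_{j}(w,\rho)\ge a\lambda(K_{n},\rho)$, exactly as in (\ref{eq:in one rep}). The trivial representation $[n]$ has $\lambda(w,[n])=0$ and supplies the main terms $b_{[n]}$ and $c_{[n]}$; the combinatorics in \cite{AK} should moreover give $c_{[n]}/b_{[n]}\ge cn^{2}$, reflecting the fact that under the $2^{\alpha}$-biased measure a positive fraction of the mass sits on permutations carrying a macroscopic cycle. For every other $\rho$ appearing in the sums, \cite{AK} should furnish the crude bounds $\dim\rho\le C^{n}n^{C}$ and $\lambda(K_{n},\rho)\ge cn^{2}$, so that
\[
\sum_{j=1}^{\dim\rho}e^{-t\lambda_{j}(w,\rho)}\le\dim(\rho)\,e^{-at\lambda(K_{n},\rho)}\le C^{n}n^{C}e^{-actn^{2}},
\]
which is negligible once $t\ge C/(an)$ with $C$ sufficiently large. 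Summed over the relatively few representations with $b_{\rho}\neq 0$ or $c_{\rho}\neq 0$, the total error is still dominated by the main term, so $Z(t)=(1+o(1))b_{[n]}$ and $N(t)=(1+o(1))c_{[n]}$, whence $m^{2}(t)\ge cn^{2}$ and $m\ge cn$.

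The main obstacle, and really the only step beyond the bookkeeping already performed for theorem \ref{thm:Bati}, is the explicit isotypic decomposition of $Z(t)$ and $N(t)$, the combinatorial identification of the partitions $\rho$ carrying a non-zero coefficient, and the associated dimension and $\lambda(K_{n},\rho)$ estimates on each such $\rho$. This is precisely the content of \cite{AK} still in preparation; granted it, the $\tfrac{1}{k}$-as-main-term analysis of theorem \ref{thm:Bati} transfers word for word to the current setting.
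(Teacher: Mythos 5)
Your proposal breaks at the central point. It is not true that the trivial representation carries the main term and every other $\rho$ with nonzero coefficient satisfies $\lambda(K_{n},\rho)\ge cn^{2}$: the two-row representations $[n-b,b]$ all appear in the expansion of $Z$ (with coefficient $a-b+1>0$) and in the expansion of $\mathbb{E}(\alpha_{k}2^{\alpha})$, and for small $b$ one only has $\lambda(K_{n},[n-b,b])$ of order $bn$, far below $n^{2}$. At $t\asymp 1/(an)$ their contributions are not negligible --- e.g.\ for $w=K_{n}$ and $t=C/n$ one has $Z(t)=\mathbb{E}(2^{\alpha(t)})\ge 2^{cn}$, since after order $n$ transpositions a positive proportion of cycles survives, whereas the trivial representation contributes only $n+1$. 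So the claims $Z(t)=(1+o(1))b_{[n]}$ and $N(t)=(1+o(1))c_{[n]}$ are false, and the ``everything except $[n]$ is exponentially small'' scheme, which does work for theorem \ref{thm:Bati} (where the coefficients $a_{\rho}$ kill all two-row representations except through the listed hook-like families with $\lambda\ge cn^{2}$ for $k\in[\frac{1}{2}n,\frac{3}{4}n]$), does not transfer to the $2^{\alpha}$-weighted quantities.

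The paper's proof handles this differently: it keeps the \emph{entire} family of two-row representations $[a,b]$ with $b\le\lfloor(n-k)/2\rfloor$ in both the truncated partition function $Z'$ and the truncated numerator $E'(k)$, discards only the representations with $\lambda(K_{n},\rho)>cn^{2}$ (three-row-plus-column shapes, and two-row shapes with large $b$, for $k\in[\frac{1}{2}n,\frac{3}{4}n]$), and then uses the exact proportionality $d_{[a,b],k}=\frac{2(a-b+1)}{k}$ from \cite{AK} to get the identity (\ref{eq:didnt expect that, did you}), $E'(k)/Z'=2/k$. This cancellation is what makes the huge, uncomputable two-row sums drop out of the ratio $m^{2}=N/Z$; without it, or without some substitute control of the two-row spectrum of $\Delta_{w}$ from above (which the hypothesis $\Delta_{w}\ge a\Delta_{K_{n}}$ does not provide), your error analysis cannot close. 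So the proposal as written has a genuine gap precisely at the step you labelled routine.
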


\begin{proof}
[Proof sketch] We keep the notations $\lambda_{i}(w,\rho)$ and $\lambda(K_{n},\rho)$
from the proof of theorem \ref{thm:Bati}, and recall that $\lambda_{i}(w,\rho)\ge a\lambda(K_{n},\rho)$.
We start with the partition function $Z$, for which we write a simple
analog of (\ref{eq:sk decomposition}):
\[
Z=\sum_{\rho=[a,b]}(a-b+1)\sum_{j=1}^{\dim\rho}e^{-t\lambda_{j}(w,\rho)}
\]
(the sum includes also $\rho=[n]$ for which we consider $a=n$ and
$b=0$. Ditto below, when we write $[a,b]$ we always entertain the
possibility that $b=0$). For $m$ we have a more complicated formula,
each $\alpha_{k}$ has 
\[
\mathbb{E}(\alpha_{k}2^{\alpha})=\sum_{\rho}d_{\rho,k}\sum_{j=1}^{\dim\rho}e^{-t\lambda_{j}(w,\rho)}
\]
with some coefficients $d_{\rho,k}$, which are described by \cite[theorem 3]{AK}.
Here we will only note a few properties of the $d_{\rho,k}$:
\begin{enumerate}
\item $d_{\rho,k}=0$ unless $\rho=[a,b,c,1^{d}]$ i.e.\ has (at most)
3 rows and one column;
\item $d_{[a,b],k}=\frac{2(a-b+1)}{k}$ for all $a$ and $b$ with $a+b=n$,
$0\le b\le\lfloor(n-k)/2\rfloor$;
\item $|d_{\rho,k}|\le2n+2$ for all $\rho$; and
\item If $k\in[\frac{1}{2}n,\frac{3}{4}n]$ then for all $\rho\ne[a,b]$
for which $d_{\rho,k}\ne0$ we have $\lambda(K_{n},\rho)>cn^{2}$.
\end{enumerate}
In particular, the first property allows to bound the dimensions of
the relevant representations. It is not difficult to conclude from
the hook formula (see e.g.\ \cite{JK81}) that $\dim\rho\le6^{n}$
for all $\rho$ of the form $[a,b,c,1^{d}]$ (the precise value 6
will play no role). 

Assume now $k\in[\frac{1}{2}n,\frac{3}{4}n]$. We make two estimates,
the first for $Z$. We note that for $[a,b]$ with $b>\lfloor(n-k)/2\rfloor$
we have $\lambda(K_{n},[a,b])>cn^{2}$ and hence the contribution
of these representations to $Z$ is negligible, namely denote
\[
Z'=\sum_{\substack{\rho=[a,b]\\
b\le\lfloor(n-k)/2\rfloor
}
}(a-b+1)\sum_{j=1}^{\dim\rho}e^{-t\lambda_{j}(w,\rho)}
\]
and get
\[
|Z-Z'|=\sum_{\substack{\rho=[a,b]\\
b>\lfloor(n-k)/2\rfloor
}
}(a-b+1)\sum_{j=1}^{\dim\rho}e^{-t\lambda_{j}(w,\rho)}\le n^{2}6^{n}\exp(-actn^{2})
\]
(the $n^{2}$ term has one $n$ bounding the number of values of $b$,
and one $n$ as a bound for $(a-b+1)$). Hence for $t>C/an$ we get
that $Z-Z'$ is negligible (recall that $Z\ge n+1$, the contribution
of the representation $[n]$).

As for $m$, we write 
\[
E'(k)=\sum_{\substack{\rho=[a,b]\\
b\le\lfloor(n-k)/2\rfloor
}
}d_{\rho,k}\sum_{j=1}^{\dim\rho}e^{-t\lambda_{j}(w,\rho)}
\]
and similarly get 
\[
|\mathbb{E}(\alpha_{k}2^{\alpha})-E'(k)\big|\le\sum_{\rho\ne[a,b]}|d_{\rho,k}|\sum_{j=1}^{\dim\rho}e^{-t\lambda_{j}(w,\rho)}\le C\cdot6^{n}n^{4}\exp(-actn^{2}).
\]
(we used here the third property of the $d_{\rho,k}$ to bound them
by $Cn$, and the fourth property to estimate $\lambda_{j}(w,\rho)\ge a\lambda(K_{n},\rho)\ge acn^{2}$).
Hence for $t>Ca/n$ this is also negligible. But
\begin{equation}
\frac{E'}{Z'}=\frac{2}{k}.\label{eq:didnt expect that, did you}
\end{equation}
Hence
\begin{align*}
m^{2} & =\frac{1}{Z}\sum_{k=1}^{n}k^{2}\mathbb{E}(\alpha_{k}2^{\alpha})\ge\frac{1}{Z}\sum_{k=\frac{1}{2}n}^{\frac{3}{4}n}k^{2}\mathbb{E}(\alpha_{k}2^{\alpha})\\
 & \ge\frac{1}{Z'}\sum_{k=\frac{1}{2}n}^{\frac{3}{4}n}k^{2}E'(k)\cdot(1+O(e^{-cn}))\\
 & \stackrel{\textrm{{\clap{(\ref{eq:didnt expect that, did you})}}}}{=}\sum_{k=\frac{1}{2}n}^{\frac{3}{4}n}2k(1+O(e^{-cn}))\ge cn^{2},
\end{align*}
as promised.
\end{proof}
\begin{cor}
The quantum Heisenberg ferromagnet on the 2 dimensional Hamming graph,
at $t>C/\sqrt{n}$ has $m\ge cn$.
\end{cor}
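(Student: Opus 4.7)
The proof should be a direct application of Theorem \ref{thm:QHF}, and the work reduces entirely to establishing the operator inequality $\Delta_w \ge (c/\sqrt{n})\, \Delta_{K_n}$ for the 2-dimensional Hamming graph. Fortunately, this is exactly the inequality already verified in the proof of the previous corollary (the one about large cycles on the Hamming graph), so really nothing new needs to be done.

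For completeness, the plan is to invoke clause 2 of Theorem \ref{thm:easy}, since the Hamming graph is regular: writing $n = m^2$, each vertex has degree $d = 2(m-1)$, so $\min w_i^2 / w_{\mathrm{tot}} = d/n$, and the simple random walk on the Hamming graph mixes in bounded time (e.g.\ a coupling argument that couples each coordinate after a single swap in that coordinate shows $\lmix w = O(1)$). Plugging into the bound $\Delta_w \ge cd/(n \lmix w)\, \Delta_{K_n}$ gives
\[
\Delta_w \ge \frac{c(2m-2)}{m^2}\, \Delta_{K_n} \ge \frac{c}{\sqrt{n}}\, \Delta_{K_n},
\]
so we may take $a = c/\sqrt{n}$.

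Now apply Theorem \ref{thm:QHF} with this value of $a$: for $t \ge C/(an) = C/(c\sqrt{n}) = C'/\sqrt{n}$, the magnetization of the QHF on the Hamming graph satisfies $m \ge cn$, as claimed.

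There is really no obstacle here beyond bookkeeping; the only mildly non-trivial input is the bound $\lmix w = O(1)$ for the Hamming graph, and this is standard (it is the same input used implicitly in the cycle corollary). Since the corollary is stated to parallel the earlier large-cycles corollary for the Hamming graph, the natural presentation is just to remark that $\Delta_w \ge (c/\sqrt{n})\Delta_{K_n}$ has already been shown and then quote Theorem \ref{thm:QHF}.
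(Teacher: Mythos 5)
Your proposal is correct and matches the paper's intended argument: the paper gives no separate proof for this corollary precisely because, as you say, the inequality $\Delta_w\ge (c/\sqrt{n})\Delta_{K_n}$ was already established in the large-cycles corollary for the Hamming graph (via clause 2 of Theorem \ref{thm:easy}, regularity with $d=2(m-1)$, and $\lmix w=O(1)$), and one then just applies Theorem \ref{thm:QHF} with $a=c/\sqrt{n}$. Your side remark on $\lmix w=O(1)$ is fine, since bounded total-variation mixing upgrades to the paper's notion $\lmix$ by Lemma \ref{lem:hashevakh lealdous}.
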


\subsection{A relation with the Caputo-Liggett-Richthammer theorem}

Finally, let us close cycles by returning to Aldous' conjecture. We
last mentioned Aldous' conjecture in the introduction, as its proof
was the first application of the octopus inequality. Equipped with
the notation $\lambda_{i}(w,\rho)$ of \S \ref{subsec:cycles} we
may now state it (which we did not do in the introduction) as follows:
\begin{equation}
\lambda_{1}(w,\rho)\ge\lambda_{1}(w,[n-1,1])\qquad\forall w,\forall\rho\ne[n].\label{eq:Aldous}
\end{equation}
The notations $[n]$ and $[n-1,1]$ are the standard notations for
presentations we also used in \S \ref{subsec:cycles}. We remark,
though, that these specific two are very simple: $[n]$ is the trivial,
one-dimensional representation while $[n-1,1]$ is the $(n-1)$-dimensional
representation one gets by removing the constants from the standard
representation, so $\lambda_{1}(w,[n-1,1])$ is simply the second
eigenvalue of the standard representation, i.e.\ of the continuous
time Markov chain corresponding to $w$. Operator comparison arguments
imply eigenvalue inequalities. Indeed, $\Delta_{w}\ge a\Delta_{K_{n}}$
is equivalent to $\lambda_{1}(w,\rho)\ge a\lambda(K_{n},\rho)$ for
all $\rho$. Thus theorem \ref{thm:easy} may be reformulated as
\begin{equation}
\lambda_{1}(w,\rho)\ge\frac{c\delta}{\lmix w}\frac{\min_{i}w_{i}^{2}}{w_{\textrm{tot}}}\lambda(K_{n},\rho)\qquad\forall w,\rho.\label{eq:lambda1 main}
\end{equation}
This begs the question: for which $\rho$ is this estimate better
than (\ref{eq:Aldous})? Let us restrict our attention to the case
that $w$ comes from a $d$-regular graph. In this case (\ref{eq:lambda1 main})
simplifies to
\[
\lambda_{1}(w,\rho)\ge\frac{cd}{n\lmix w}\lambda(K_{n},\rho).
\]
Further, it is generally true that $\lmix w\le(Cd\log n)/\lambda_{1}(w,[n-1,1])$,
see e.g.\ \cite[lemma 4.23]{AF} (let us explain the notations of
Aldous and Fill, for easier comparison: the relevant clause is the
continuous time inequality. Their $\tau_{2}$ is simply $1/\lambda_{1}$
and their $\pi^{*}$ is the minimum of the stationary measure, which
is in this case simply $1/n$. Finally the $d$ comes because our
$\lmix$ is the discrete mixing time, while their $\tau_{1}$ is the
continuous mixing time). We get
\[
\lambda_{1}(w,\rho)\ge\frac{c\lambda(K_{n},\rho)}{n\log n}\lambda_{1}(w,[n-1,1]).
\]
Thus we get a better estimate than (\ref{eq:Aldous}) for any $\rho$
for which $\lambda(K_{n},\rho)\ge Cn\log n$. This condition holds
for the vast majority of representations, for example for all those
with more than $C\log n$ boxes below the first row, see \cite[lemma 7]{DS81}
(again, for easier comparison, our $\lambda(K_{n},\rho)$ is $\binom{n}{2}(1-r(\rho))$
where $r$ is the character ratio calculated ibid.).

\subsection*{Acknowledgements}

We wish to thank Nick Crawford for noting the application to the Hamming
graph. We thank Jonathan Hermon for many interesting discussions,
and for lemma \ref{lem:jonathan}. GK is supported by the Israel Science
Foundation, by the Jesselson Foundation and by Paul and Tina Gardner.

\end{document}